\crefname{hypothesis}{Hypothesis}{Hypotheses}
\DeclareMathOperator*{\Null}{null}
\DeclareMathOperator*{\Range}{range}
\DeclareMathOperator*{\rank}{rank}
\let\s=\scriptscriptstyle
\title{{Convergence analysis of inexact two-grid methods: A theoretical framework}\thanks{Submitted to the editors DATE.
\funding{This paper is based on Xu's Ph.D.~thesis~\cite{XXF-thesis} at the Academy of Mathematics and Systems Science, Chinese Academy of Sciences. Zhang was partially supported by the National Key R\&D Program of China (2020YFA0711900, 2020YFA0711904), the National Science Foundation of China (11971472), and the Key Research Program of Frontier Sciences of CAS.}}}
\author{Xuefeng Xu\thanks{Corresponding author.~Department of Mathematics, Purdue University, West Lafayette, IN 47907, USA (\email{xuxuefeng@lsec.cc.ac.cn}, \email{xu1412@purdue.edu}).}
\and Chen-Song Zhang\thanks{LSEC \& NCMIS, Academy of Mathematics and Systems Science, Chinese Academy of Sciences, Beijing 100190, China, and School of Mathematical Sciences, University of Chinese Academy of Sciences, Beijing 100049, China (\email{zhangcs@lsec.cc.ac.cn}).}}
\begin{document}

\maketitle

\begin{abstract}
Multigrid is one of the most efficient methods for solving large-scale linear systems that arise from discretized partial differential equations. As a foundation for multigrid analysis, two-grid theory plays an important role in motivating and analyzing multigrid algorithms. For symmetric positive definite problems, the convergence theory of two-grid methods with exact solution of the Galerkin coarse-grid system is mature, and the convergence factor of exact two-grid methods can be characterized by an identity. Compared with the exact case, the convergence theory of inexact two-grid methods (i.e., the coarse-grid system is solved approximately) is of more practical significance, while it is still less developed in the literature (one reason is that the error propagation matrix of inexact coarse-grid correction is not a projection). In this paper, we develop a theoretical framework for the convergence analysis of inexact two-grid methods. More specifically, we present two-sided bounds for the energy norm of the error propagation matrix of inexact two-grid methods, from which one can readily obtain the identity for exact two-grid convergence. As an application, we establish a unified convergence theory for multigrid methods, which allows the coarsest-grid system to be solved approximately.
\end{abstract}

\begin{keywords}
Multigrid, inexact two-grid methods, convergence factor, eigenvalue analysis
\end{keywords}

\begin{AMS}
65F08, 65F10, 65N55, 15A18
\end{AMS}

\section{Introduction}

Multigrid is a powerful solver, with linear or near-linear computational complexity, for a large class of linear systems arising from discretized partial differential equations; see, e.g.,~\cite{Briggs2000,Trottenberg2001,Vassilevski2008}. The idea of multigrid originated with Fedorenko in the 1960s~\cite{Fedorenko1962,Fedorenko1964}, while it received scant attention until the work of Brandt in the 1970s~\cite{Brandt1973,Brandt1977}. Some fundamental elements for the convergence analysis of multigrid methods are attributed to Hackbusch~\cite{Hackbusch1980,Hackbusch1981}. Other representative works on the early development of multigrid methods can be found in~\cite{Hackbusch1985,Yserentant1993,Briggs2000,Trottenberg2001} and the references therein. Since the early 1980s, multigrid has been extensively studied and applied in scientific and engineering computing; see, e.g.,~\cite{Trottenberg2001,Vassilevski2008}.

The foundation of multigrid methods is a two-grid scheme, which combines two complementary processes: \textit{smoothing} (or \textit{local relaxation}) and \textit{coarse-grid correction}. The smoothing process is typically a simple iterative method, such as the (weighted) Jacobi and Gauss--Seidel iterations. In general, these classical methods are efficient at eliminating high-frequency (i.e., oscillatory) error, while low-frequency (i.e., smooth) error components cannot be eliminated effectively~\cite{Briggs2000,Trottenberg2001}. To remedy this defect, a coarse-grid correction strategy is used in the two-grid scheme: the low-frequency error can be further reduced by solving a coarse-grid system. The coarse-grid correction process involves two intergrid operators that transfer information between fine- and coarse-grids: one is a \textit{restriction} matrix that restricts the residual formed on a fine-grid to a coarser-grid; the other is a \textit{prolongation} (or \textit{interpolation}) matrix $P\in\mathbb{R}^{n\times n_{\rm c}}$ ($n$ and $n_{\rm c}$ are the numbers of fine and coarse variables, respectively) with full column rank that extends the correction computed on the coarse-grid to the fine-grid. Typically, the restriction matrix is taken to be $P^{T}$, the transpose of $P$, as considered in this paper. The so-called \textit{Galerkin coarse-grid matrix} is defined as $A_{\rm c}:=P^{T}AP\in\mathbb{R}^{n_{\rm c}\times n_{\rm c}}$, which gives a coarse representation of the fine-grid matrix $A\in\mathbb{R}^{n\times n}$ ($A$ is assumed to be symmetric positive definite (SPD)).

Regarding two-grid analysis, most previous works (see, e.g.,~\cite{Falgout2005,Zikatanov2008,Notay2015,XZ2017,Brannick2018}) focus on exact two-grid methods, with some exceptions like~\cite{Notay2007,Vassilevski2008}. An identity has been established to characterize the convergence factor of exact two-grid methods~\cite{XZ2002,Falgout2005}. In practice, however, it is often too costly to solve the Galerkin coarse-grid system exactly, especially when its size is large. Instead, one may solve the coarse-grid system approximately as long as the convergence speed is satisfactory. A recursive call (e.g., the V- and W-cycles) of two-grid procedure yields a multigrid method, which can be regarded as an inexact two-grid scheme. It is well known that two-grid convergence is sufficient to assess the W-cycle multigrid convergence; see, e.g.,~\cite{Hackbusch1985,Trottenberg2001}. Based on the idea of hierarchical basis~\cite{Bank1988} and the minimization property of Schur complements (see, e.g.,~\cite[Theorem~3.8]{Axelsson1994}), Notay~\cite{Notay2007} derived an upper bound for the convergence factor of inexact two-grid methods. With this estimate, Notay~\cite[Theorem~3.1]{Notay2007} showed that, if the convergence factor of exact two-grid method at any level is uniformly bounded by $\sigma<1/2$, then the convergence factor of the corresponding W-cycle multigrid method is uniformly bounded by $\sigma/(1-\sigma)$.

Besides theoretical considerations, two-grid theory can be used to guide the design of multigrid algorithms. The implementation of a multigrid scheme on large-scale parallel machines is still a challenging topic, especially in the era of exascale computing. For instance, stencil sizes (the number of nonzero entries in a row) of the Galerkin coarse-grid matrices tend to increase further down in the multilevel hierarchy of algebraic multigrid methods~\cite{Brandt1985,Brandt1986,Ruge1987}, which will increase the cost of communication. As problem size increases and the number of levels grows, the overall efficiency of parallel algebraic multigrid methods may decrease dramatically~\cite{Falgout2014}. Motivated by the inexact two-grid theory in~\cite{Notay2007}, Falgout and Schroder~\cite{Falgout2014} proposed a non-Galerkin coarsening strategy to improve the parallel efficiency of algebraic multigrid algorithms. Some other sparse approximations to $A_{\rm c}$ can be found, e.g., in~\cite{Brandt2000,Sterck2006,Sterck2008}.

Algebraic multigrid constructs the coarsening process in a purely algebraic manner (that is, the explicit knowledge of geometric properties is not required), which has been widely applied in scientific and engineering problems associated with complex domains, unstructured grids, jump coefficients, etc; see, e.g.,~\cite{Vassilevski2008,XZ2017}. As stated in~\cite{Notay2007}, it is possible to prove optimal convergence properties of multigrid methods via some smoothing and approximation properties or via the theory of subspace correction methods; see, e.g.,~\cite{Braess1982,Braess1983,Hackbusch1985,McCormick1985,Mandel1988,Xu1992,Yserentant1993,Oswald1994}. However, convergence bounds derived by these approaches do not, in general, give satisfactory predictions of actual convergence speed~\cite[page~96]{Trottenberg2001}. Moreover, for algebraic multigrid methods, it may be difficult to check some required assumptions~\cite{Notay2007}. In fact, two-grid analysis is still a main strategy for assessing and analyzing algebraic multigrid methods~\cite{MacLachlan2014,Notay2015}.

In this paper, we develop a theoretical framework for the convergence analysis of inexact two-grid methods, in which the Galerkin coarse-grid matrix $A_{\rm c}$ is replaced by a general SPD matrix $B_{\rm c}\in\mathbb{R}^{n_{\rm c}\times n_{\rm c}}$. More precisely, we present lower and upper bounds for the energy norm of the error propagation matrix of inexact two-grid methods, from which one can readily get the identity for exact two-grid convergence. The new upper bounds are sharper than the existing one in~\cite{Notay2007} (see~\cref{comp-Notay}). As an application of the framework, we establish a unified convergence theory for multigrid methods, in which the coarsest-grid system is not required to be solved exactly.

The rest of this paper is organized as follows. In~\cref{sec:pre}, we review some fundamental properties of two-grid methods and an elegant identity for the convergence factor of exact two-grid methods. In~\cref{sec:iTG}, we present a theoretical framework for the convergence analysis of inexact two-grid methods. In~\cref{sec:MG}, we establish a unified convergence theory for multigrid methods based on the proposed framework. In~\cref{sec:con}, we give some concluding remarks.

\section{Preliminaries} \label{sec:pre}

In this section, we review some useful properties of two-grid methods, which play a fundamental role in the convergence analysis of inexact two-grid methods. For convenience, we list some notation used in the subsequent discussions.

\begin{itemize}

\item[--] $I_{n}$ denotes the $n\times n$ identity matrix (or $I$ when its size is clear from context).

\item[--] $\lambda_{\min}(\cdot)$, $\lambda_{\min}^{+}(\cdot)$, and $\lambda_{\max}(\cdot)$ denote the smallest eigenvalue, the smallest positive eigenvalue, and the largest eigenvalue of a matrix, respectively.

\item[--] $\lambda(\cdot)$ denotes the spectrum of a matrix.

\item[--] $\rho(\cdot)$ denotes the spectral radius of a matrix.

\item[--] $\langle\cdot,\cdot\rangle$ denotes the standard Euclidean inner product of two vectors.

\item[--] $\|\cdot\|_{2}$ denotes the spectral norm of a matrix.

\item[--] $\|\cdot\|_{A}$ denotes the energy norm induced by an SPD matrix $A\in\mathbb{R}^{n\times n}$: for any $\mathbf{v}\in\mathbb{R}^{n}$, $\|\mathbf{v}\|_{A}=\langle A\mathbf{v},\mathbf{v}\rangle^{\frac{1}{2}}$; for any $B\in\mathbb{R}^{n\times n}$, $\|B\|_{A}=\max\limits_{\mathbf{v}\in\mathbb{R}^{n}\backslash\{0\}}\frac{\|B\mathbf{v}\|_{A}}{\|\mathbf{v}\|_{A}}$.

\item[--] $\kappa_{A}(\cdot)$ denotes the condition number, with respect to $\|\cdot\|_{A}$, of a matrix.

\end{itemize}

\subsection{Two-grid methods} \label{subsec:TG}

Consider solving the linear system
\begin{equation}\label{system}
A\mathbf{u}=\mathbf{f},
\end{equation}
where $A\in\mathbb{R}^{n\times n}$ is SPD, $\mathbf{u}\in\mathbb{R}^{n}$, and $\mathbf{f}\in\mathbb{R}^{n}$. Given an initial guess $\mathbf{u}^{(0)}\in\mathbb{R}^{n}$ and a nonsingular smoother $M\in\mathbb{R}^{n\times n}$, we perform the following iteration:
\begin{equation}\label{relax}
\mathbf{u}^{(k+1)}=\mathbf{u}^{(k)}+M^{-1}\big(\mathbf{f}-A\mathbf{u}^{(k)}\big) \quad k=0,1,\ldots
\end{equation}
From~\cref{relax}, we have
\begin{displaymath}
\mathbf{u}-\mathbf{u}^{(k+1)}=(I-M^{-1}A)\big(\mathbf{u}-\mathbf{u}^{(k)}\big),
\end{displaymath}
which leads to
\begin{displaymath}
\|\mathbf{u}-\mathbf{u}^{(k)}\|_{A}\leq\|I-M^{-1}A\|_{A}^{k}\|\mathbf{u}-\mathbf{u}^{(0)}\|_{A}.
\end{displaymath}
For any initial guess $\mathbf{u}^{(0)}$, if $\|I-M^{-1}A\|_{A}<1$, then
\begin{displaymath}
\lim_{k\rightarrow+\infty}\|\mathbf{u}-\mathbf{u}^{(k)}\|_{A}=0.
\end{displaymath}
Since
\begin{displaymath}
\|(I-M^{-1}A)\mathbf{v}\|_{A}^{2}=\|\mathbf{v}\|_{A}^{2}-\langle(M+M^{T}-A)M^{-1}A\mathbf{v},\,M^{-1}A\mathbf{v}\rangle \quad \forall\,\mathbf{v}\in\mathbb{R}^{n},
\end{displaymath}
a sufficient and necessary condition for the iteration~\cref{relax} to be \textit{$A$-convergent}, that is, $\|I-M^{-1}A\|_{A}<1$, is that $M+M^{T}-A$ is SPD.

For an $A$-convergent smoother $M$, we define two \textit{symmetrized} variants:
\begin{subequations}
\begin{align}
\overline{M}&:=M(M+M^{T}-A)^{-1}M^{T},\label{barM}\\
\widetilde{M}&:=M^{T}(M+M^{T}-A)^{-1}M.\label{tildM}
\end{align}
\end{subequations}
It is easy to check that
\begin{subequations}
\begin{align}
I-\overline{M}^{-1}A&=(I-M^{-T}A)(I-M^{-1}A),\label{rela-barM}\\
I-\widetilde{M}^{-1}A&=(I-M^{-1}A)(I-M^{-T}A),\label{rela-tildM}
\end{align}
\end{subequations}
from which one can easily deduce that both $\overline{M}-A$ and $\widetilde{M}-A$ are symmetric positive semidefinite (SPSD).

Usually, the iteration~\cref{relax} can only eliminate high-frequency error effectively. To further reduce the remaining low-frequency modes, a coarse-grid correction strategy is used in two-grid scheme. Let $P\in\mathbb{R}^{n\times n_{\rm c}}$ be a prolongation (or interpolation) matrix of rank $n_{\rm c}$, where $n_{\rm c} \ (<n)$ is the number of coarse variables. The Galerkin coarse-grid matrix takes the form $A_{\rm c}=P^{T}AP\in\mathbb{R}^{n_{\rm c}\times n_{\rm c}}$. Let $\mathbf{u}^{(\ell)}\in\mathbb{R}^{n}$ be an approximation to $\mathbf{u}\equiv A^{-1}\mathbf{f}$, e.g., $\mathbf{u}^{(\ell)}$ is generated from~\cref{relax}. The (exact) coarse-grid correction can be described as follows:
\begin{equation}\label{correction}
\mathbf{u}^{(\ell+1)}=\mathbf{u}^{(\ell)}+PA_{\rm c}^{-1}P^{T}\big(\mathbf{f}-A\mathbf{u}^{(\ell)}\big).
\end{equation}
Let
\begin{equation}\label{piA}
\varPi_{A}=PA_{\rm c}^{-1}P^{T}A.
\end{equation}
Then
\begin{displaymath}
\mathbf{u}-\mathbf{u}^{(\ell+1)}=(I-\varPi_{A})\big(\mathbf{u}-\mathbf{u}^{(\ell)}\big).
\end{displaymath}
Note that $I-\varPi_{A}$ is an $A$-orthogonal projection along (or parallel to) $\Range(P)$ onto $\Null(P^{T}A)$. Thus,
\begin{displaymath}
(I-\varPi_{A})\mathbf{e}=0 \quad\forall\,\mathbf{e}\in\Range(P),
\end{displaymath}
which suggests that $I-\varPi_{A}$ can remove the error components contained in the coarse space $\Range(P)$. That is, an efficient coarse-grid correction will be achieved if $\Range(P)$ can cover most of the low-frequency error.

With the iterations~\cref{relax} and~\cref{correction}, a symmetric two-grid scheme for solving~\cref{system} can be described by~\cref{alg:TG}. If the SPD coarse-grid matrix $B_{\rm c}$ in~\cref{alg:TG} is taken to be $A_{\rm c}$, then the algorithm is called an \textit{exact} two-grid method; otherwise, it is called an \textit{inexact} two-grid method.

\begin{algorithm}

\caption{\textbf{Two-grid method}}

\label{alg:TG}

\begin{algorithmic}[1]

\STATE Presmoothing: $\mathbf{u}^{(1)}\gets\mathbf{u}^{(0)}+M^{-1}\big(\mathbf{f}-A\mathbf{u}^{(0)}\big)$ \hfill\COMMENT{$M+M^{T}-A\in\mathbb{R}^{n\times n}$ is SPD}

\smallskip

\STATE Restriction: $\mathbf{r}_{\rm c}\gets P^{T}\big(\mathbf{f}-A\mathbf{u}^{(1)}\big)$ \hfill\COMMENT{$P\in\mathbb{R}^{n\times n_{\rm c}}$ has full column rank}

\smallskip

\STATE Coarse-grid correction: $\mathbf{e}_{\rm c}\gets B_{\rm c}^{-1}\mathbf{r}_{\rm c}$ \hfill\COMMENT{$B_{\rm c}\in\mathbb{R}^{n_{\rm c}\times n_{\rm c}}$ is SPD}

\smallskip

\STATE Prolongation: $\mathbf{u}^{(2)}\gets\mathbf{u}^{(1)}+P\mathbf{e}_{\rm c}$

\smallskip

\STATE Postsmoothing: $\mathbf{u}_{\rm ITG}\gets\mathbf{u}^{(2)}+M^{-T}\big(\mathbf{f}-A\mathbf{u}^{(2)}\big)$

\smallskip

\end{algorithmic}

\end{algorithm}

From~\cref{alg:TG}, we have
\begin{displaymath}
\mathbf{u}-\mathbf{u}_{\rm ITG}=E_{\rm ITG}\big(\mathbf{u}-\mathbf{u}^{(0)}\big),
\end{displaymath}
where
\begin{equation}\label{tE_TG1}
E_{\rm ITG}=(I-M^{-T}A)(I-PB_{\rm c}^{-1}P^{T}A)(I-M^{-1}A).
\end{equation}
It is referred to as the \textit{iteration matrix} (or \textit{error propagation matrix}) of~\cref{alg:TG}, which can be expressed as
\begin{equation}\label{tE_TG2}
E_{\rm ITG}=I-B_{\rm ITG}^{-1}A
\end{equation}
with
\begin{equation}\label{inv_tB_TG}
B_{\rm ITG}^{-1}=\overline{M}^{-1}+(I-M^{-T}A)PB_{\rm c}^{-1}P^{T}(I-AM^{-1}).
\end{equation}
Since $\overline{M}$ and $B_{\rm c}$ are SPD, we deduce from~\cref{inv_tB_TG} that $B_{\rm ITG}$ is an SPD matrix, which is called the \textit{inexact two-grid preconditioner}. In view of~\cref{tE_TG2}, we have
\begin{equation}\label{norm_tE_TG}
\|E_{\rm ITG}\|_{A}=\rho(E_{\rm ITG})=\max\big\{\lambda_{\max}\big(B_{\rm ITG}^{-1}A\big)-1,\,1-\lambda_{\min}\big(B_{\rm ITG}^{-1}A\big)\big\},
\end{equation}
which is referred to as the \textit{convergence factor} of~\cref{alg:TG}.

\subsection{Convergence of exact two-grid methods} \label{subsec:XZ}

The convergence properties of~\cref{alg:TG} with $B_{\rm c}=A_{\rm c}$ have been well studied by the multigrid community. For its algebraic analysis, we refer to~\cite{Vassilevski2008,MacLachlan2014,Notay2015} and the references therein.

Denote the iteration matrix of~\cref{alg:TG} with $B_{\rm c}=A_{\rm c}$ by $E_{\rm TG}$. Then
\begin{equation}\label{E_TG1}
E_{\rm TG}=(I-M^{-T}A)(I-\varPi_{A})(I-M^{-1}A),
\end{equation}
where $\varPi_{A}$ is given by~\cref{piA}. Similarly, $E_{\rm TG}$ can be expressed as
\begin{equation}\label{E_TG2}
E_{\rm TG}=I-B_{\rm TG}^{-1}A
\end{equation}
with
\begin{equation}\label{invB_TG}
B_{\rm TG}^{-1}=\overline{M}^{-1}+(I-M^{-T}A)PA_{\rm c}^{-1}P^{T}(I-AM^{-1}).
\end{equation}
The SPD matrix $B_{\rm TG}$ is called the \textit{exact two-grid preconditioner}.

The following theorem provides an identity for the convergence factor $\|E_{\rm TG}\|_{A}$~\cite[Theorem~4.3]{Falgout2005}, which is a two-level version of the XZ-identity~\cite{XZ2002,Zikatanov2008}.

\begin{theorem}
Let $\widetilde{M}$ be defined by~\cref{tildM}, and let
\begin{equation}\label{piM}
\varPi_{\widetilde{M}}=P(P^{T}\widetilde{M}P)^{-1}P^{T}\widetilde{M}.
\end{equation}
Then, the convergence factor of~\cref{alg:TG} with $B_{\rm c}=A_{\rm c}$ can be characterized as
\begin{equation}\label{XZ}
\|E_{\rm TG}\|_{A}=1-\frac{1}{K_{\rm TG}},
\end{equation}
where
\begin{equation}\label{K_TG}
K_{\rm TG}=\max_{\mathbf{v}\in\mathbb{R}^{n}\backslash\{0\}}\frac{\big\|(I-\varPi_{\widetilde{M}})\mathbf{v}\big\|_{\widetilde{M}}^{2}}{\|\mathbf{v}\|_{A}^{2}}.
\end{equation}
\end{theorem}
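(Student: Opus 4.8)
The plan is to show that $\|E_{\rm TG}\|_{A}$ and $K_{\rm TG}$ are two different-looking expressions for the \emph{same} scalar, namely
\[
\Gamma:=\sup_{\mathbf{g}\neq 0,\,P^{T}\mathbf{g}=0}\frac{\|\mathbf{g}\|_{A^{-1}}^{2}}{\|\mathbf{g}\|_{\widetilde{M}^{-1}}^{2}},
\]
and then to read off $\|E_{\rm TG}\|_{A}=1-1/\Gamma=1-1/K_{\rm TG}$. Throughout I write $\langle\mathbf{x},\mathbf{y}\rangle_{A}:=\langle A\mathbf{x},\mathbf{y}\rangle$ for the $A$-inner product and $X:=I-M^{-1}A$, whose $A$-adjoint is $X^{\ast}=I-M^{-T}A$. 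First I would record the structure of $E_{\rm TG}$: since $I-\varPi_{A}$ is the $A$-orthogonal projection onto $V_{0}:=\Null(P^{T}A)$, \cref{E_TG1} reads $E_{\rm TG}=X^{\ast}(I-\varPi_{A})X$, so $E_{\rm TG}$ is $A$-self-adjoint and $\langle E_{\rm TG}\mathbf{v},\mathbf{v}\rangle_{A}=\|(I-\varPi_{A})X\mathbf{v}\|_{A}^{2}\geq 0$; hence $E_{\rm TG}$ is SPSD in the $A$-inner product and $\|E_{\rm TG}\|_{A}=\lambda_{\max}(E_{\rm TG})=\max_{\mathbf{v}\neq 0}\langle E_{\rm TG}\mathbf{v},\mathbf{v}\rangle_{A}/\|\mathbf{v}\|_{A}^{2}$.

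Second, I would reduce $\|E_{\rm TG}\|_{A}$ to $\Gamma$. As $(I-\varPi_{A})X\mathbf{v}$ is the $A$-orthogonal projection of $X\mathbf{v}$ onto $V_{0}$, its norm has the dual form $\|(I-\varPi_{A})X\mathbf{v}\|_{A}=\sup_{\mathbf{u}\in V_{0}\setminus\{0\}}\langle X\mathbf{v},\mathbf{u}\rangle_{A}/\|\mathbf{u}\|_{A}=\sup_{\mathbf{u}\in V_{0}}\langle\mathbf{v},X^{\ast}\mathbf{u}\rangle_{A}/\|\mathbf{u}\|_{A}$. Maximizing over $\mathbf{v}$, interchanging the two suprema, and applying Cauchy--Schwarz in the $A$-inner product gives $\|E_{\rm TG}\|_{A}=\sup_{\mathbf{u}\in V_{0}\setminus\{0\}}\|X^{\ast}\mathbf{u}\|_{A}^{2}/\|\mathbf{u}\|_{A}^{2}$. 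Now \cref{rela-tildM} yields $XX^{\ast}=I-\widetilde{M}^{-1}A$, whence $\|X^{\ast}\mathbf{u}\|_{A}^{2}=\langle XX^{\ast}\mathbf{u},\mathbf{u}\rangle_{A}=\|\mathbf{u}\|_{A}^{2}-\langle\widetilde{M}^{-1}A\mathbf{u},A\mathbf{u}\rangle$. Substituting $\mathbf{g}:=A\mathbf{u}$ (so that $\mathbf{u}\in V_{0}\Leftrightarrow P^{T}\mathbf{g}=0$, with $\|\mathbf{u}\|_{A}^{2}=\|\mathbf{g}\|_{A^{-1}}^{2}$ and $\langle\widetilde{M}^{-1}A\mathbf{u},A\mathbf{u}\rangle=\|\mathbf{g}\|_{\widetilde{M}^{-1}}^{2}$) gives
\[
\|E_{\rm TG}\|_{A}=1-\inf_{\mathbf{g}\neq 0,\,P^{T}\mathbf{g}=0}\frac{\|\mathbf{g}\|_{\widetilde{M}^{-1}}^{2}}{\|\mathbf{g}\|_{A^{-1}}^{2}}=1-\frac{1}{\Gamma}.
\]

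Third, I would show $K_{\rm TG}=\Gamma$ by the identical duality carried out in the $\widetilde{M}$-inner product. Because $\varPi_{\widetilde{M}}$ is the $\widetilde{M}$-orthogonal projection onto $\Range(P)$, one has $\|(I-\varPi_{\widetilde{M}})\mathbf{v}\|_{\widetilde{M}}^{2}=\sup_{\mathbf{h}\in\Null(P^{T}\widetilde{M})}\langle\widetilde{M}\mathbf{v},\mathbf{h}\rangle^{2}/\langle\widetilde{M}\mathbf{h},\mathbf{h}\rangle$; the substitution $\mathbf{g}:=\widetilde{M}\mathbf{h}$ turns the right-hand side into $\sup_{P^{T}\mathbf{g}=0}\langle\mathbf{v},\mathbf{g}\rangle^{2}/\|\mathbf{g}\|_{\widetilde{M}^{-1}}^{2}$. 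Inserting this into \cref{K_TG}, interchanging suprema, and using $\max_{\mathbf{v}\neq 0}\langle\mathbf{v},\mathbf{g}\rangle^{2}/\|\mathbf{v}\|_{A}^{2}=\|\mathbf{g}\|_{A^{-1}}^{2}$ (Cauchy--Schwarz), I obtain $K_{\rm TG}=\Gamma$, and comparing with the previous display completes the proof.

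The hard part is the two duality reductions and arranging for them to meet. The key realization is that the norm of the $A$-orthogonal projection onto $V_{0}$ and the $\widetilde{M}$-distance from $\mathbf{v}$ to $\Range(P)$ can each be rewritten as a supremum over the respective orthogonal complement, and that the substitutions $\mathbf{g}=A\mathbf{u}$ and $\mathbf{g}=\widetilde{M}\mathbf{h}$ transport both problems onto the \emph{same} Euclidean subspace $\{\mathbf{g}:P^{T}\mathbf{g}=0\}$. The identity $XX^{\ast}=I-\widetilde{M}^{-1}A$ from \cref{rela-tildM} is precisely what replaces the smoother-dependent factor by $\widetilde{M}^{-1}$ and forces the two reductions to land on the common quotient $\Gamma$; once this structure is identified, the interchanges of suprema and the Cauchy--Schwarz equalities are routine. (An alternative, more computational route would instead establish $\lambda_{\min}(B_{\rm TG}^{-1}A)=1/K_{\rm TG}$ by exploiting the additive form $B_{\rm TG}^{-1}=\overline{M}^{-1}+(I-M^{-T}A)PA_{\rm c}^{-1}P^{T}(I-AM^{-1})$ in \cref{invB_TG}, but the duality argument above seems cleaner and fully self-contained.)
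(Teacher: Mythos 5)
Your argument is correct, and it is worth noting that the paper itself does not prove this theorem at all---it is quoted from Falgout--Vassilevski--Zikatanov with a citation to \cite{Falgout2005}, so there is no in-paper proof to match. What you supply is a legitimate self-contained derivation. I checked the three stages: (i) $E_{\rm TG}=X^{\ast}(I-\varPi_{A})X$ with $X=I-M^{-1}A$ is indeed $A$-self-adjoint and $A$-SPSD, so $\|E_{\rm TG}\|_{A}=\max_{\mathbf{v}\neq 0}\|(I-\varPi_{A})X\mathbf{v}\|_{A}^{2}/\|\mathbf{v}\|_{A}^{2}$; (ii) the dual representation of the $A$-orthogonal projection onto $V_{0}=\Null(P^{T}A)$, the interchange of suprema, Cauchy--Schwarz, and the identity $XX^{\ast}=I-\widetilde{M}^{-1}A$ from \cref{rela-tildM} correctly give $\|E_{\rm TG}\|_{A}=1-\inf_{P^{T}\mathbf{g}=0}\|\mathbf{g}\|_{\widetilde{M}^{-1}}^{2}/\|\mathbf{g}\|_{A^{-1}}^{2}$ (the substitution $\mathbf{g}=A\mathbf{u}$ maps $V_{0}$ bijectively onto $\{\mathbf{g}:P^{T}\mathbf{g}=0\}$, which is nontrivial since $n_{\rm c}<n$); and (iii) the parallel duality in the $\widetilde{M}$-inner product, using that the $\widetilde{M}$-orthogonal complement of $\Range(P)$ is $\Null(P^{T}\widetilde{M})$ and that $\mathbf{g}=\widetilde{M}\mathbf{h}$ lands on the same constraint set, correctly identifies $K_{\rm TG}$ with the same quantity $\Gamma$. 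Compared with the usual proofs in the literature (which typically characterize $\lambda_{\min}(B_{\rm TG}^{-1}A)$ through a min--max over stable decompositions $\mathbf{v}=\mathbf{v}_{f}+P\mathbf{v}_{\rm c}$, in the spirit of the XZ-identity, or through the additive form \cref{invB_TG}), your double-duality route buys a shorter path that never mentions $B_{\rm TG}$ and makes transparent \emph{why} the smoother enters only through $\widetilde{M}$: the product $XX^{\ast}$ collapses to $I-\widetilde{M}^{-1}A$. The only cosmetic caveat is that when $(I-\varPi_{A})X\mathbf{v}=0$ the dual supremum is an unattained $0$, which you should phrase as a supremum rather than a maximum; this does not affect the result.
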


\begin{remark}
The matrix $\varPi_{\widetilde{M}}$ given by~\cref{piM} is an $\widetilde{M}$-orthogonal projection onto $\Range(P)$. That is, $\varPi_{\widetilde{M}}^{2}=\varPi_{\widetilde{M}}$, $\Range(\varPi_{\widetilde{M}})=\Range(P)$, and $\varPi_{\widetilde{M}}$ is self-adjoint with respect to the inner product $\langle\cdot,\cdot\rangle_{\widetilde{M}}:=\langle\widetilde{M}\cdot,\cdot\rangle$.
\end{remark}

\begin{remark}\label{rmk:cond-numb}
The expression~\cref{E_TG1} implies that $A^{\frac{1}{2}}E_{\rm TG}A^{-\frac{1}{2}}$ is an SPSD matrix with smallest eigenvalue $0$. Since
\begin{displaymath}
A^{\frac{1}{2}}E_{\rm TG}A^{-\frac{1}{2}}=I-A^{\frac{1}{2}}B_{\rm TG}^{-1}A^{\frac{1}{2}},
\end{displaymath}
we get that $B_{\rm TG}-A$ is also SPSD and $\lambda_{\max}\big(B_{\rm TG}^{-1}A\big)=1$. Due to
\begin{displaymath}
1-\frac{1}{K_{\rm TG}}=\|E_{\rm TG}\|_{A}=\rho(E_{\rm TG})=\lambda_{\max}(E_{\rm TG})=1-\lambda_{\min}\big(B_{\rm TG}^{-1}A\big),
\end{displaymath}
it follows that
\begin{displaymath}
\lambda_{\min}\big(B_{\rm TG}^{-1}A\big)=\frac{1}{K_{\rm TG}}.
\end{displaymath}
As a result, we have
\begin{displaymath}
K_{\rm TG}=\frac{\lambda_{\max}\big(B_{\rm TG}^{-1}A\big)}{\lambda_{\min}\big(B_{\rm TG}^{-1}A\big)}.
\end{displaymath}
This shows that $K_{\rm TG}$ is the corresponding condition number when~\cref{alg:TG} with $B_{\rm c}=A_{\rm c}$ is treated as a preconditioning method.
\end{remark}

As is well known, the aim of two-grid methods is to balance the interplay between smoother and coarse space (or interpolation). For a fixed smoother $M$ (e.g., the Jacobi or Gauss--Seidel type), an \textit{optimal} interpolation can be obtained by minimizing $K_{\rm TG}$. In practice, however, it is often too costly to compute the optimal interpolation, because it requires the explicit knowledge of eigenvectors corresponding to small eigenvalues of the generalized eigenvalue problem $A\mathbf{x}=\lambda\widetilde{M}\mathbf{x}$; see~\cite{XZ2017,Brannick2018} for details. To find a cheap alternative to the optimal interpolation, one may minimize a suitable upper bound for $K_{\rm TG}$.

Let $R$ be an $n_{\rm c}\times n$ matrix with the property $RP=I_{n_{\rm c}}$, and let $Q=PR$. Clearly, $Q\in\mathbb{R}^{n\times n}$ is a projection onto $\Range(P)$. In light of~\cref{K_TG}, we have
\begin{displaymath}
K_{\rm TG}=\max_{\mathbf{v}\in\mathbb{R}^{n}\backslash\{0\}}\min_{\mathbf{v}_{\rm c}\in\mathbb{R}^{n_{\rm c}}}\frac{\|\mathbf{v}-P\mathbf{v}_{\rm c}\|_{\widetilde{M}}^{2}}{\|\mathbf{v}\|_{A}^{2}}\leq\max_{\mathbf{v}\in\mathbb{R}^{n}\backslash\{0\}}\frac{\|(I-Q)\mathbf{v}\|_{\widetilde{M}}^{2}}{\|\mathbf{v}\|_{A}^{2}}=:K,
\end{displaymath}
which, together with~\cref{XZ}, yields
\begin{displaymath}
\|E_{\rm TG}\|_{A}\leq 1-\frac{1}{K}.
\end{displaymath}
By minimizing $K$ over all interpolations, one can obtain an \textit{ideal} interpolation~\cite{Falgout2004,XXF2018}, which gives a strategy for designing an interpolation with sparse or simple structure; see, e.g.,~\cite{Manteuffel2017,Manteuffel2018,XXF2018,Manteuffel2019}. In particular, if $R=(P^{T}\widetilde{M}P)^{-1}P^{T}\widetilde{M}$, then $K=K_{\rm TG}$; see~\cite{XXF2018} for a quantitative relation between $K$ and $K_{\rm TG}$. Hence, the ideal interpolation can be viewed as a generalization of the optimal one.

\section{Convergence of inexact two-grid methods} \label{sec:iTG}

In this section, we develop a general framework for the convergence analysis of~\cref{alg:TG}. More specifically, lower and upper bounds for the convergence factor of~\cref{alg:TG} are established.

According to~\cref{norm_tE_TG}, the main task of estimating $\|E_{\rm ITG}\|_{A}$ is to bound the extreme eigenvalues of $B_{\rm ITG}^{-1}A$. It was proved by Notay~\cite[Theorem~2.2]{Notay2007} that
\begin{subequations}
\begin{align}
&\lambda_{\max}\big(B_{\rm ITG}^{-1}A\big)\leq\max\big\{1,\,\lambda_{\max}(B_{\rm c}^{-1}A_{\rm c})\big\}\lambda_{\max}\big(B_{\rm TG}^{-1}A\big),\label{Notay-b1}\\
&\lambda_{\min}\big(B_{\rm ITG}^{-1}A\big)\geq\min\big\{1,\,\lambda_{\min}(B_{\rm c}^{-1}A_{\rm c})\big\}\lambda_{\min}\big(B_{\rm TG}^{-1}A\big),\label{Notay-b2}
\end{align}
\end{subequations}
which, together with~\cref{norm_tE_TG} and~\cref{rmk:cond-numb}, yield
\begin{equation}\label{Notay1}
\|E_{\rm ITG}\|_{A}\leq\max\Bigg\{1-\frac{\min\big\{1,\,\lambda_{\min}(B_{\rm c}^{-1}A_{\rm c})\big\}}{K_{\rm TG}},\,\max\big\{1,\,\lambda_{\max}(B_{\rm c}^{-1}A_{\rm c})\big\}-1\Bigg\}.
\end{equation}
Consider a special case that $B_{\rm c}=\alpha I_{n_{\rm c}}$ with $\alpha>0$. In this case, the iteration matrix $E_{\rm ITG}$ takes the form
\begin{displaymath}
E_{\rm ITG}=(I-M^{-T}A)\bigg(I-\frac{1}{\alpha}PP^{T}A\bigg)(I-M^{-1}A).
\end{displaymath}
Obviously, \cref{alg:TG} tends to an algorithm with only two smoothing steps when $\alpha\rightarrow+\infty$. It is easy to check that the convergence factor of the limiting algorithm is $1-\lambda_{\min}(\widetilde{M}^{-1}A)<1$. However, the estimate~\cref{Notay1} gives nothing but a trivial upper bound $1$, from which one cannot explicitly determine whether the limiting algorithm is convergent. This suggests that the estimate~\cref{Notay1} is not sharp in some situations.

In what follows, we establish a new convergence theory for~\cref{alg:TG} based on some technical eigenvalue identities and the well-known Weyl's theorem.

We first give several important eigenvalue identities, which will be frequently used in the subsequent analysis.

\begin{lemma}
The extreme eigenvalues of $(I-\widetilde{M}^{-1}A)(I-\varPi_{A})$ and $(I-\widetilde{M}^{-1}A)\varPi_{A}$ have the following properties:
\begin{subequations}
\begin{align}
&\lambda_{\min}\big((I-\widetilde{M}^{-1}A)(I-\varPi_{A})\big)=0,\label{eig1.1}\\
&\lambda_{\max}\big((I-\widetilde{M}^{-1}A)(I-\varPi_{A})\big)=1-\frac{1}{K_{\rm TG}},\label{eig1.2}\\
&\lambda_{\min}\big((I-\widetilde{M}^{-1}A)\varPi_{A}\big)=0,\label{eig1.3}\\
&\lambda_{\max}\big((I-\widetilde{M}^{-1}A)\varPi_{A}\big)=1-\lambda_{\min}^{+}(\widetilde{M}^{-1}A\varPi_{A}).\label{eig1.4}
\end{align}
\end{subequations}
\end{lemma}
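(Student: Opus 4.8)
The plan is to reduce all four identities to spectra of manifestly symmetric (or $A$-self-adjoint, positive semidefinite) matrices, exploiting the elementary fact that, for any two matrices $Y,Z\in\mathbb{R}^{n\times n}$, the products $YZ$ and $ZY$ have the same characteristic polynomial and hence identical spectra (counting multiplicities). Throughout I write $G:=I-\widetilde{M}^{-1}A$. By~\cref{rela-tildM}, $G=(I-M^{-1}A)(I-M^{-T}A)$, so $G$ is $A$-self-adjoint; moreover, since $\widetilde{M}-A$ is SPSD, the eigenvalues of $\widetilde{M}^{-1}A$ lie in $(0,1]$, so $G$ is $A$-SPSD with spectrum contained in $[0,1)$. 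The strict upper bound $\lambda(G)\subset[0,1)$, which reflects that $\widetilde{M}^{-1}A$ is SPD, will be decisive for~\cref{eig1.4}.

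For~\cref{eig1.1,eig1.2}, set $Y:=I-M^{-1}A$ and $Z:=(I-M^{-T}A)(I-\varPi_{A})$. Then $YZ=(I-\widetilde{M}^{-1}A)(I-\varPi_{A})=G(I-\varPi_{A})$ by~\cref{rela-tildM}, while $ZY=(I-M^{-T}A)(I-\varPi_{A})(I-M^{-1}A)=E_{\rm TG}$ by~\cref{E_TG1}. Since $Y,Z\in\mathbb{R}^{n\times n}$, the matrices $G(I-\varPi_{A})$ and $E_{\rm TG}$ have identical spectra, and it suffices to quote the established facts about $E_{\rm TG}$: by~\cref{rmk:cond-numb}, $A^{\frac12}E_{\rm TG}A^{-\frac12}$ is SPSD with smallest eigenvalue $0$, and by the XZ identity~\cref{XZ}, $\lambda_{\max}(E_{\rm TG})=\|E_{\rm TG}\|_{A}=1-1/K_{\rm TG}$. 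Reading these off for $G(I-\varPi_{A})$ yields~\cref{eig1.1,eig1.2} at once.

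For~\cref{eig1.3,eig1.4}, I exploit $\varPi_{A}^{2}=\varPi_{A}$ in the same fashion: applying the $YZ\leftrightarrow ZY$ identity with $Y=G\varPi_{A}$ and $Z=\varPi_{A}$ shows that $G\varPi_{A}$ and $\varPi_{A}G\varPi_{A}$ share their spectrum, and likewise $\widetilde{M}^{-1}A\varPi_{A}=(I-G)\varPi_{A}$ shares its spectrum with $\varPi_{A}(I-G)\varPi_{A}=\varPi_{A}-\varPi_{A}G\varPi_{A}$. Using the $A$-orthogonal splitting $\mathbb{R}^{n}=\Range(P)\oplus\Null(P^{T}A)$ induced by the $A$-orthogonal projection $\varPi_{A}$, the operator $\varPi_{A}G\varPi_{A}$ is block-diagonal: it vanishes on $\Null(P^{T}A)$ and restricts to the $A$-self-adjoint PSD compression $T$ of $G$ on $\Range(P)$. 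Hence its spectrum is $\{\mu_{1},\dots,\mu_{n_{\rm c}}\}\cup\{0\}$, where the $\mu_{i}\in[0,1)$ are the eigenvalues of $T$, the strictness $\mu_{i}<1$ coming from $\lambda(G)\subset[0,1)$. Since $n_{\rm c}<n$, the $n-n_{\rm c}$ surplus zeros force $\lambda_{\min}(G\varPi_{A})=0$ (this is~\cref{eig1.3}) and $\lambda_{\max}(G\varPi_{A})=\max_{i}\mu_{i}$. Correspondingly, $\varPi_{A}-\varPi_{A}G\varPi_{A}$ has spectrum $\{1-\mu_{i}\}\cup\{0^{\,n-n_{\rm c}}\}$ with every $1-\mu_{i}\in(0,1]$ strictly positive, so the smallest positive eigenvalue of $\widetilde{M}^{-1}A\varPi_{A}$ equals $1-\max_{i}\mu_{i}$; combining the two gives $\lambda_{\max}(G\varPi_{A})=\max_{i}\mu_{i}=1-\lambda_{\min}^{+}(\widetilde{M}^{-1}A\varPi_{A})$, which is~\cref{eig1.4}.

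The genuinely routine content is the product algebra; the two points that demand care are the \emph{sign} information and the \emph{strictness}. Nonnegativity of the relevant spectra is never assumed but produced, either by transporting the spectrum onto the manifestly SPSD matrix $E_{\rm TG}$ (for~\cref{eig1.1,eig1.2}) or onto the $A$-self-adjoint PSD compression $T$ (for~\cref{eig1.3,eig1.4}); this is what guarantees that the surplus zeros are the true minimum and are not undercut by a stray negative eigenvalue. I expect the main obstacle to be~\cref{eig1.4}: one must check that no $1-\mu_{i}$ collapses to $0$, so that $\lambda_{\min}^{+}(\widetilde{M}^{-1}A\varPi_{A})$ is exactly $1-\max_{i}\mu_{i}$ rather than some spurious smaller positive value. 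This rests precisely on the strict bound $\mu_{i}<1$, equivalently on $\widetilde{M}^{-1}A$ being SPD so that the spectrum of $G$ lies strictly below $1$.
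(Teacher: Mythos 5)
Your proof is correct, but it reaches the four identities by a different mechanism than the paper. The paper constructs an explicit similarity $X$ that diagonalizes $\varPi_{A}$ into $\bigl(\begin{smallmatrix} I & 0\\ 0 & 0\end{smallmatrix}\bigr)$, reads off block-triangular forms for the four products, and then pins down the spectra of the diagonal blocks $\widehat{X}_{11}$ and $\widehat{X}_{22}$ by relating them to $\widetilde{M}^{-1}A\varPi_{A}$ and to $B_{\rm TG}^{-1}A$ (whose spectrum is $[\frac{1}{K_{\rm TG}},1]$ by \cref{rmk:cond-numb}); the strict positivity $\lambda_{\min}(\widehat{X}_{11})>0$ is obtained via an SPSD square-root comparison giving $\lambda_{\max}((I-\widetilde{M}^{-1}A)\varPi_{A})\leq 1-\lambda_{\min}(\widetilde{M}^{-1}A)<1$. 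You instead use $\lambda(YZ)=\lambda(ZY)$ to transport $(I-\widetilde{M}^{-1}A)(I-\varPi_{A})$ directly onto $E_{\rm TG}$ and quote \cref{XZ} and \cref{rmk:cond-numb}, which disposes of \cref{eig1.1,eig1.2} with essentially no computation; and for \cref{eig1.3,eig1.4} you replace the paper's coordinate computation by the invariant statement that $\varPi_{A}G\varPi_{A}$ is the $A$-orthogonal compression of $G$ to $\Range(P)$ padded with $n-n_{\rm c}$ zeros. The two routes rest on the same external facts (the XZ identity, $\widetilde{M}-A$ SPSD, and $\lambda_{\min}(\widetilde{M}^{-1}A)>0$), and your strictness argument $\mu_{i}\leq\lambda_{\max}(G)<1$ is exactly the coordinate-free counterpart of the paper's square-root estimate; what your version buys is a shorter, basis-free derivation of \cref{eig1.1,eig1.2} and a transparent explanation of why the ``surplus'' eigenvalue $0$ of multiplicity $n-n_{\rm c}$ appears in both $(I-\widetilde{M}^{-1}A)\varPi_{A}$ and $\widetilde{M}^{-1}A\varPi_{A}$, while the paper's block form has the advantage of handling all four identities, and the later perturbed products $(I-\widetilde{M}^{-1}A)(I-r\varPi_{A})$, within one uniform set of formulas.
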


\begin{proof}
Since $\varPi_{A}^{2}=\varPi_{A}$ and $\rank(\varPi_{A})=n_{\rm c}$, there exists a nonsingular matrix $X\in\mathbb{R}^{n\times n}$ such that
\begin{displaymath}
X^{-1}\varPi_{A}X=\begin{pmatrix}
I_{n_{\rm c}} & 0 \\
0 & 0
\end{pmatrix}.
\end{displaymath}
Let
\begin{displaymath}
X^{-1}\widetilde{M}^{-1}AX=\begin{pmatrix}
\widehat{X}_{11} & \widehat{X}_{12} \\
\widehat{X}_{21} & \widehat{X}_{22}
\end{pmatrix},
\end{displaymath}
where $\widehat{X}_{ij}\in\mathbb{R}^{m_{i}\times m_{j}}$ with $m_{1}=n_{\rm c}$ and $m_{2}=n-n_{\rm c}$. Then
\begin{subequations}
\begin{align}
&X^{-1}\big(\widetilde{M}^{-1}A(I-\varPi_{A})+\varPi_{A}\big)X=\begin{pmatrix}
I_{n_{\rm c}} & \widehat{X}_{12} \\
0 & \widehat{X}_{22}
\end{pmatrix},\label{exp-1}\\
&X^{-1}(I-\widetilde{M}^{-1}A)(I-\varPi_{A})X=\begin{pmatrix}
0 & -\widehat{X}_{12} \\
0 & I-\widehat{X}_{22}
\end{pmatrix},\label{exp-2}\\
&X^{-1}(I-\widetilde{M}^{-1}A)\varPi_{A}X=\begin{pmatrix}
I-\widehat{X}_{11} & 0 \\
-\widehat{X}_{21} & 0
\end{pmatrix},\label{exp-3}\\
&X^{-1}\widetilde{M}^{-1}A\varPi_{A}X=\begin{pmatrix}
\widehat{X}_{11} & 0 \\
\widehat{X}_{21} & 0
\end{pmatrix}.\label{exp-4}
\end{align}
\end{subequations}

Using~\cref{rela-barM} and~\cref{invB_TG}, we obtain
\begin{align*}
B_{\rm TG}^{-1}A&=\overline{M}^{-1}A+(I-M^{-T}A)\varPi_{A}(I-M^{-1}A)\\
&=I-(I-M^{-T}A)(I-\varPi_{A})(I-M^{-1}A),
\end{align*}
which, together with~\cref{rela-tildM}, yields
\begin{align*}
\lambda\big(B_{\rm TG}^{-1}A\big)&=\lambda\big(I-(I-M^{-1}A)(I-M^{-T}A)(I-\varPi_{A})\big)\\
&=\lambda\big(I-(I-\widetilde{M}^{-1}A)(I-\varPi_{A})\big)\\
&=\lambda\big(\widetilde{M}^{-1}A(I-\varPi_{A})+\varPi_{A}\big).
\end{align*}
According to~\cref{rmk:cond-numb} and~\cref{exp-1}, we deduce that
\begin{displaymath}
\lambda(\widehat{X}_{22})\subset\bigg[\frac{1}{K_{\rm TG}},\,1\bigg] \quad \text{and} \quad \lambda_{\min}(\widehat{X}_{22})=\frac{1}{K_{\rm TG}}.
\end{displaymath}
Then, by~\cref{exp-2}, we have
\begin{align*}
\lambda_{\min}\big((I-\widetilde{M}^{-1}A)(I-\varPi_{A})\big)&=\min\big\{0,\,1-\lambda_{\max}(\widehat{X}_{22})\big\}=0,\\
\lambda_{\max}\big((I-\widetilde{M}^{-1}A)(I-\varPi_{A})\big)&=\max\big\{0,\,1-\lambda_{\min}(\widehat{X}_{22})\big\}=1-\frac{1}{K_{\rm TG}}.
\end{align*}

Since $\widetilde{M}-A$ is SPSD and $\varPi_{A}=PA_{\rm c}^{-1}P^{T}A$, it follows that
\begin{displaymath}
\lambda(\widetilde{M}^{-1}A\varPi_{A})\subset[0,1],
\end{displaymath}
which, combined with~\cref{exp-4}, yields $\lambda(\widehat{X}_{11})\subset[0,1]$. In light of~\cref{exp-3}, we have
\begin{align}
\lambda_{\min}\big((I-\widetilde{M}^{-1}A)\varPi_{A}\big)&=\min\big\{0,\,1-\lambda_{\max}(\widehat{X}_{11})\big\}=0,\notag\\
\lambda_{\max}\big((I-\widetilde{M}^{-1}A)\varPi_{A}\big)&=\max\big\{0,\,1-\lambda_{\min}(\widehat{X}_{11})\big\}=1-\lambda_{\min}(\widehat{X}_{11}).\label{tMAPi1}
\end{align}
Note that both $A^{-1}-\widetilde{M}^{-1}$ and
\begin{displaymath}
(A^{-1}-\widetilde{M}^{-1})^{\frac{1}{2}}A(A^{-1}-\widetilde{M}^{-1})^{\frac{1}{2}}-(A^{-1}-\widetilde{M}^{-1})^{\frac{1}{2}}A\varPi_{A}(A^{-1}-\widetilde{M}^{-1})^{\frac{1}{2}}
\end{displaymath}
are SPSD. We then have
\begin{align*}
\lambda_{\max}\big((I-\widetilde{M}^{-1}A)\varPi_{A}\big)&=\lambda_{\max}\big((A^{-1}-\widetilde{M}^{-1})^{\frac{1}{2}}A\varPi_{A}(A^{-1}-\widetilde{M}^{-1})^{\frac{1}{2}}\big)\\
&\leq\lambda_{\max}\big((A^{-1}-\widetilde{M}^{-1})^{\frac{1}{2}}A(A^{-1}-\widetilde{M}^{-1})^{\frac{1}{2}}\big)\\
&=1-\lambda_{\min}(\widetilde{M}^{-1}A)<1.
\end{align*}
The above inequality, together with~\cref{tMAPi1}, leads to $\lambda_{\min}(\widehat{X}_{11})>0$. Thus,
\begin{displaymath}
\lambda_{\max}\big((I-\widetilde{M}^{-1}A)\varPi_{A}\big)=1-\lambda_{\min}(\widehat{X}_{11})=1-\lambda_{\min}^{+}(\widetilde{M}^{-1}A\varPi_{A}).
\end{displaymath}
This completes the proof.
\end{proof}

Let $S_{1}\in\mathbb{R}^{n\times n}$ and $S_{2}\in\mathbb{R}^{n\times n}$ be symmetric matrices. Denote the spectra of $S_{1}$, $S_{2}$, and $S_{1}+S_{2}$ by $\{\lambda_{i}(S_{1})\}_{i=1}^{n}$, $\{\lambda_{i}(S_{2})\}_{i=1}^{n}$, and $\{\lambda_{i}(S_{1}+S_{2})\}_{i=1}^{n}$, respectively. For each $k=1,\ldots,n$, the Weyl's theorem (see, e.g.,~\cite[Theorem~4.3.1]{Horn2013}) states that
\begin{equation}\label{Weyl}
\lambda_{k-j+1}(S_{1})+\lambda_{j}(S_{2})\leq\lambda_{k}(S_{1}+S_{2})\leq\lambda_{k+\ell}(S_{1})+\lambda_{n-\ell}(S_{2})
\end{equation}
for all $j=1,\ldots,k$ and $\ell=0,\ldots,n-k$, where $\lambda_{i}(\cdot)$ denotes the $i$-th smallest eigenvalue of a matrix. In particular, one has
\begin{subequations}
\begin{align}
\lambda_{\min}(S_{1}+S_{2})&\geq\lambda_{\min}(S_{1})+\lambda_{\min}(S_{2}),\label{Weyl-min-low}\\
\lambda_{\min}(S_{1}+S_{2})&\leq\min\big\{\lambda_{\min}(S_{1})+\lambda_{\max}(S_{2}),\,\lambda_{\max}(S_{1})+\lambda_{\min}(S_{2})\big\},\label{Weyl-min-up}\\
\lambda_{\max}(S_{1}+S_{2})&\geq\max\big\{\lambda_{\max}(S_{1})+\lambda_{\min}(S_{2}),\,\lambda_{\min}(S_{1})+\lambda_{\max}(S_{2})\big\},\label{Weyl-max-low}\\
\lambda_{\max}(S_{1}+S_{2})&\leq\lambda_{\max}(S_{1})+\lambda_{\max}(S_{2}).\label{Weyl-max-up}
\end{align}
\end{subequations}

It is worth noting that the Weyl's theorem can also be \textit{applied} to the nonsymmetric matrix $(I-\widetilde{M}^{-1}A)(I-r\varPi_{A})$ with parameter $r$. Indeed, $(I-\widetilde{M}^{-1}A)(I-r\varPi_{A})$ has the same spectrum as the symmetric matrix $(A^{-1}-\widetilde{M}^{-1})^{\frac{1}{2}}A(I-r\varPi_{A})(A^{-1}-\widetilde{M}^{-1})^{\frac{1}{2}}$. One can first apply the Weyl's theorem to the symmetric one, and then transform the result into a form related to $I-\widetilde{M}^{-1}A$, $(I-\widetilde{M}^{-1}A)\varPi_{A}$, or $(I-\widetilde{M}^{-1}A)(I-\varPi_{A})$. For example, if $r\geq 0$, we get from~\cref{Weyl-max-up} that
\begin{align*}
\lambda_{\max}\big((I-\widetilde{M}^{-1}A)(I-r\varPi_{A})\big)&=\lambda_{\max}\big((A^{-1}-\widetilde{M}^{-1})^{\frac{1}{2}}A(I-r\varPi_{A})(A^{-1}-\widetilde{M}^{-1})^{\frac{1}{2}}\big)\\
&\leq\lambda_{\max}\big((A^{-1}-\widetilde{M}^{-1})^{\frac{1}{2}}A(A^{-1}-\widetilde{M}^{-1})^{\frac{1}{2}}\big)\\
&\quad-r\lambda_{\min}\big((A^{-1}-\widetilde{M}^{-1})^{\frac{1}{2}}A\varPi_{A}(A^{-1}-\widetilde{M}^{-1})^{\frac{1}{2}}\big)\\
&=\lambda_{\max}(I-\widetilde{M}^{-1}A)-r\lambda_{\min}\big((I-\widetilde{M}^{-1}A)\varPi_{A}\big).
\end{align*}
For brevity, such a trick will be \textit{implicitly} used in the subsequent discussions.

We are now in a position to present a new convergence theory for~\cref{alg:TG}.

\begin{theorem}\label{thm:iTG}
Let
\begin{equation}\label{r12}
r_{1}=\lambda_{\min}(B_{\rm c}^{-1}A_{\rm c}) \quad \text{and} \quad r_{2}=\lambda_{\max}(B_{\rm c}^{-1}A_{\rm c}).
\end{equation}
Under the assumptions of~\cref{alg:TG}, the convergence factor $\|E_{\rm ITG}\|_{A}$ satisfies the following estimates.

{\rm (i)} If $r_{2}\leq 1$, then
\begin{equation}\label{iTG1.1}
\mathscr{L}_{1}\leq\|E_{\rm ITG}\|_{A}\leq\mathscr{U}_{1},
\end{equation}
where
\begin{align*}
\mathscr{L}_{1}&=1-\min\bigg\{\frac{1}{K_{\rm TG}},\,\lambda_{\min}(\widetilde{M}^{-1}A)+r_{2}\big(1-\lambda_{\min}^{+}(\widetilde{M}^{-1}A\varPi_{A})\big)\bigg\},\\
\mathscr{U}_{1}&=1-\frac{r_{1}}{K_{\rm TG}}-(1-r_{1})\lambda_{\min}(\widetilde{M}^{-1}A).
\end{align*}

{\rm (ii)} If $r_{1}\leq 1<r_{2}$, then
\begin{equation}\label{iTG1.2}
\mathscr{L}_{2}\leq\|E_{\rm ITG}\|_{A}\leq\max\big\{\mathscr{U}_{1},\,\mathscr{U}_{2}\big\},
\end{equation}
where
\begin{align*}
\mathscr{L}_{2}&=1-\min\bigg\{\lambda_{\max}(\widetilde{M}^{-1}A),\,\frac{r_{2}}{K_{\rm TG}}-(r_{2}-1)\lambda_{\min}(\widetilde{M}^{-1}A)\bigg\},\\
\mathscr{U}_{2}&=(r_{2}-1)\big(1-\lambda_{\min}^{+}(\widetilde{M}^{-1}A\varPi_{A})\big).
\end{align*}

{\rm (iii)} If $1<r_{1}$, then
\begin{equation}\label{iTG1.3}
\max\big\{\mathscr{L}_{2},\,\mathscr{L}_{3}\big\}\leq\|E_{\rm ITG}\|_{A}\leq\mathscr{U}_{3},
\end{equation}
where
\begin{align*}
\mathscr{L}_{3}&=r_{1}-1-\min\Big\{r_{1}\lambda_{\min}^{+}(\widetilde{M}^{-1}A\varPi_{A})-\lambda_{\min}(\widetilde{M}^{-1}A),\,(r_{1}-1)\lambda_{\max}(\widetilde{M}^{-1}A)\Big\},\\
\mathscr{U}_{3}&=\max\bigg\{1-\frac{1}{K_{\rm TG}},\,(r_{2}-1)\big(1-\lambda_{\min}^{+}(\widetilde{M}^{-1}A\varPi_{A})\big)\bigg\}.
\end{align*}
\end{theorem}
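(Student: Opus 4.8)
The plan is to reduce the computation of $\|E_{\rm ITG}\|_{A}$ to a single symmetric eigenvalue problem and then to sandwich the relevant symmetric matrix between two matrices whose extreme eigenvalues are already controlled by the preceding lemma. Write $\Phi:=PB_{\rm c}^{-1}P^{T}A$. First I would rewrite the spectrum: by the cyclic invariance of the spectrum together with~\cref{rela-tildM},
\[
\lambda\big(B_{\rm ITG}^{-1}A\big)=\lambda\big(I-(I-\widetilde{M}^{-1}A)(I-\Phi)\big).
\]
Since $A(I-\Phi)=A-APB_{\rm c}^{-1}P^{T}A$ is symmetric, the $(A^{-1}-\widetilde{M}^{-1})^{\frac{1}{2}}$-trick already highlighted in the text shows that $(I-\widetilde{M}^{-1}A)(I-\Phi)$ shares its spectrum with the symmetric matrix $H:=D^{\frac{1}{2}}A(I-\Phi)D^{\frac{1}{2}}$, where $D:=A^{-1}-\widetilde{M}^{-1}$ is SPSD. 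Hence $\lambda\big(B_{\rm ITG}^{-1}A\big)=1-\lambda(H)$, and~\cref{norm_tE_TG} becomes
\[
\|E_{\rm ITG}\|_{A}=\max\big\{\lambda_{\max}(H),\,-\lambda_{\min}(H)\big\},
\]
so everything reduces to two-sided control of $\lambda_{\max}(H)$ and $\lambda_{\min}(H)$.

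The crucial structural step is to compare $\Phi$ with $\varPi_{A}$. Because $P^{T}A$ is surjective and $\lambda(B_{\rm c}^{-1}A_{\rm c})\subset[r_{1},r_{2}]$ is equivalent to $r_{1}A_{\rm c}^{-1}\preceq B_{\rm c}^{-1}\preceq r_{2}A_{\rm c}^{-1}$, a direct quadratic-form computation (testing on $w=P^{T}A\mathbf{v}$, using $\langle A\Phi\mathbf{v},\mathbf{v}\rangle=\langle B_{\rm c}^{-1}w,w\rangle$ and $\langle A\varPi_{A}\mathbf{v},\mathbf{v}\rangle=\langle A_{\rm c}^{-1}w,w\rangle$) gives the operator inequalities $r_{1}A\varPi_{A}\preceq A\Phi\preceq r_{2}A\varPi_{A}$. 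Conjugating by $D^{\frac{1}{2}}$ then yields
\[
F-r_{2}\Psi\preceq H\preceq F-r_{1}\Psi,\qquad F:=D^{\frac{1}{2}}AD^{\frac{1}{2}},\quad\Psi:=D^{\frac{1}{2}}A\varPi_{A}D^{\frac{1}{2}}.
\]
By eigenvalue monotonicity this pins $\lambda_{\max}(H)$ and $\lambda_{\min}(H)$ between the corresponding extreme eigenvalues of $F-r_{1}\Psi$ and $F-r_{2}\Psi$, so that
\[
\max\big\{\lambda_{\max}(F-r_{2}\Psi),\,-\lambda_{\min}(F-r_{1}\Psi)\big\}\le\|E_{\rm ITG}\|_{A}\le\max\big\{\lambda_{\max}(F-r_{1}\Psi),\,-\lambda_{\min}(F-r_{2}\Psi)\big\}.
\]

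Next I would feed in the spectral data supplied by the lemma. The same transformation trick identifies $\lambda(F)=\lambda(I-\widetilde{M}^{-1}A)$, $\lambda(\Psi)=\lambda((I-\widetilde{M}^{-1}A)\varPi_{A})$, and $\lambda(F-\Psi)=\lambda((I-\widetilde{M}^{-1}A)(I-\varPi_{A}))$, so that~\cref{eig1.1,eig1.2,eig1.3,eig1.4} give $\lambda_{\max}(F)=1-\lambda_{\min}(\widetilde{M}^{-1}A)$, $\lambda_{\min}(F)=1-\lambda_{\max}(\widetilde{M}^{-1}A)$, $\lambda_{\min}(\Psi)=0$, $\lambda_{\max}(\Psi)=1-\lambda_{\min}^{+}(\widetilde{M}^{-1}A\varPi_{A})$, $\lambda_{\min}(F-\Psi)=0$, and $\lambda_{\max}(F-\Psi)=1-1/K_{\rm TG}$. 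To estimate $\lambda_{\max}(F-r\Psi)$ and $\lambda_{\min}(F-r\Psi)$ for $r\in\{r_{1},r_{2}\}$ I apply Weyl's inequalities~\cref{Weyl-min-low,Weyl-min-up,Weyl-max-low,Weyl-max-up} to suitable splittings of $F-r\Psi$: the convex combination $r(F-\Psi)+(1-r)F$ when $r\le1$, and the differences $r(F-\Psi)-(r-1)F$, $(F-\Psi)-(r-1)\Psi$, or the plain split $F+(-r\Psi)$ when $r>1$. For instance, $r_{1}(F-\Psi)+(1-r_{1})F$ with~\cref{Weyl-max-up} gives exactly $\lambda_{\max}(F-r_{1}\Psi)\le\mathscr{U}_{1}$, while~\cref{Weyl-max-low} applied to $F+(-r_{2}\Psi)$ and to $(F-\Psi)+(1-r_{2})\Psi$ produces the two terms of $\mathscr{L}_{1}$; the remaining quantities $\mathscr{U}_{2},\mathscr{U}_{3},\mathscr{L}_{2},\mathscr{L}_{3}$ arise from the analogous splittings.

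Finally, I would organize the output by the signs of $r_{1}-1$ and $r_{2}-1$, which dictate whether each $1-r$ or $r-1$ factor is nonnegative and hence which Weyl bound is admissible; the three regimes $r_{2}\le1$, $r_{1}\le1<r_{2}$, and $1<r_{1}$ then yield~\cref{iTG1.1,iTG1.2,iTG1.3} respectively. The main obstacle is not any individual inequality but the bookkeeping in this last step: for each case one must select, among several admissible splittings, the one matching each target, and must verify that the ``other'' extreme eigenvalue (for example $-\lambda_{\min}(F-r_{2}\Psi)$ in case~(i), which is nonpositive there) is dominated by the advertised bound, so that the two-sided estimate collapses to the stated form. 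Establishing the operator sandwich $r_{1}A\varPi_{A}\preceq A\Phi\preceq r_{2}A\varPi_{A}$ in the second step is the key conceptual ingredient that makes this entire scheme work.
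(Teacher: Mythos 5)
Your proposal is correct and follows essentially the same route as the paper: reduce to the symmetric matrix $(A^{-1}-\widetilde M^{-1})^{1/2}A(I-PB_{\rm c}^{-1}P^{T}A)(A^{-1}-\widetilde M^{-1})^{1/2}$, sandwich it between $F-r_{1}\Psi$ and $F-r_{2}\Psi$ via $r_{1}A\varPi_{A}\preceq APB_{\rm c}^{-1}P^{T}A\preceq r_{2}A\varPi_{A}$ (the paper invokes this implicitly ``in view of \cref{r12}''; you spell it out), and then bound the extreme eigenvalues of $F-r_{k}\Psi$ by Weyl's inequalities applied to the same splittings $r(F-\Psi)+(1-r)F$, $(F-\Psi)+(1-r)\Psi$, and $F-r\Psi$, combined with the eigenvalue identities of the preceding lemma. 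The remaining case-by-case bookkeeping you describe is exactly what the paper's proof carries out.
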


\begin{proof}
By~\cref{tE_TG1} and~\cref{tE_TG2}, we have
\begin{displaymath}
B_{\rm ITG}^{-1}A=I-(I-M^{-T}A)(I-PB_{\rm c}^{-1}P^{T}A)(I-M^{-1}A).
\end{displaymath}
Then
\begin{align*}
\lambda\big(B_{\rm ITG}^{-1}A\big)&=\lambda\big(I-(I-M^{-1}A)(I-M^{-T}A)(I-PB_{\rm c}^{-1}P^{T}A)\big)\\
&=\lambda\big(I-(I-\widetilde{M}^{-1}A)(I-PB_{\rm c}^{-1}P^{T}A)\big),
\end{align*}
which yields
\begin{align*}
\lambda_{\max}\big(B_{\rm ITG}^{-1}A\big)&=1-\lambda_{\min}\big((I-\widetilde{M}^{-1}A)(I-PB_{\rm c}^{-1}P^{T}A)\big),\\
\lambda_{\min}\big(B_{\rm ITG}^{-1}A\big)&=1-\lambda_{\max}\big((I-\widetilde{M}^{-1}A)(I-PB_{\rm c}^{-1}P^{T}A)\big).
\end{align*}
Note that $(I-\widetilde{M}^{-1}A)(I-PB_{\rm c}^{-1}P^{T}A)$ has the same eigenvalues as the symmetric matrix $(A^{-1}-\widetilde{M}^{-1})^{\frac{1}{2}}A(I-PB_{\rm c}^{-1}P^{T}A)(A^{-1}-\widetilde{M}^{-1})^{\frac{1}{2}}$. In view of~\cref{r12}, we have
\begin{subequations}
\begin{align}
-s_{1}&\leq\lambda_{\max}\big(B_{\rm ITG}^{-1}A\big)-1\leq-s_{2},\label{max-low-up}\\
t_{2}&\leq 1-\lambda_{\min}\big(B_{\rm ITG}^{-1}A\big)\leq t_{1},\label{min-low-up}
\end{align}
\end{subequations}
where
\begin{align*}
s_{k}&=\lambda_{\min}\big((I-\widetilde{M}^{-1}A)(I-r_{k}\varPi_{A})\big),\\
t_{k}&=\lambda_{\max}\big((I-\widetilde{M}^{-1}A)(I-r_{k}\varPi_{A})\big).
\end{align*}
According to~\cref{norm_tE_TG}, \cref{max-low-up}, and~\cref{min-low-up}, we deduce that
\begin{equation}\label{two-side}
\max\{-s_{1},\,t_{2}\}\leq\|E_{\rm ITG}\|_{A}\leq\max\{-s_{2},\,t_{1}\}.
\end{equation}

Next, we are devoted to establishing the upper bounds for $s_{1}$ and $t_{1}$, as well as the lower bounds for $s_{2}$ and $t_{2}$. The remainder of this proof is divided into three parts corresponding to the cases $r_{2}\leq 1$, $r_{1}\leq 1<r_{2}$, and $1<r_{1}$.

\textbf{Case 1:} $r_{2}\leq 1$. By~\cref{Weyl-min-up}, we have that
\begin{align*}
s_{1}&=\lambda_{\min}\big((I-\widetilde{M}^{-1}A)\big(I-\varPi_{A}+(1-r_{1})\varPi_{A}\big)\big)\\
&\leq\lambda_{\min}\big((I-\widetilde{M}^{-1}A)(I-\varPi_{A})\big)+(1-r_{1})\lambda_{\max}\big((I-\widetilde{M}^{-1}A)\varPi_{A}\big)\\
&=1-r_{1}-(1-r_{1})\lambda_{\min}^{+}(\widetilde{M}^{-1}A\varPi_{A})
\end{align*}
and
\begin{align*}
s_{1}&=\lambda_{\min}\big((I-\widetilde{M}^{-1}A)(I-r_{1}\varPi_{A})\big)\\
&\leq\lambda_{\min}(I-\widetilde{M}^{-1}A)-r_{1}\lambda_{\min}\big((I-\widetilde{M}^{-1}A)\varPi_{A}\big)\\
&=1-\lambda_{\max}(\widetilde{M}^{-1}A),
\end{align*}
where we have used the facts~\cref{eig1.1}, \cref{eig1.3}, and~\cref{eig1.4}. Then
\begin{equation}\label{s1-up1}
s_{1}\leq 1-\max\Big\{r_{1}+(1-r_{1})\lambda_{\min}^{+}(\widetilde{M}^{-1}A\varPi_{A}),\,\lambda_{\max}(\widetilde{M}^{-1}A)\Big\}.
\end{equation}
By~\cref{Weyl-min-low}, we have
\begin{align*}
s_{2}&=\lambda_{\min}\big((I-\widetilde{M}^{-1}A)\big((1-r_{2})I+r_{2}(I-\varPi_{A})\big)\big)\\
&\geq(1-r_{2})\lambda_{\min}(I-\widetilde{M}^{-1}A)+r_{2}\lambda_{\min}\big((I-\widetilde{M}^{-1}A)(I-\varPi_{A})\big),
\end{align*}
which, together with~\cref{eig1.1}, yields
\begin{equation}\label{s2-low1}
s_{2}\geq(1-r_{2})\big(1-\lambda_{\max}(\widetilde{M}^{-1}A)\big).
\end{equation}

Using~\cref{Weyl-max-up}, we obtain
\begin{align*}
t_{1}&=\lambda_{\max}\big((I-\widetilde{M}^{-1}A)\big((1-r_{1})I+r_{1}(I-\varPi_{A})\big)\big)\\
&\leq(1-r_{1})\lambda_{\max}(I-\widetilde{M}^{-1}A)+r_{1}\lambda_{\max}\big((I-\widetilde{M}^{-1}A)(I-\varPi_{A})\big).
\end{align*}
The above inequality, combined with~\cref{eig1.2}, yields
\begin{equation}\label{t1-up1}
t_{1}\leq 1-\frac{r_{1}}{K_{\rm TG}}-(1-r_{1})\lambda_{\min}(\widetilde{M}^{-1}A).
\end{equation}
By~\cref{eig1.2}--\cref{eig1.4} and~\cref{Weyl-max-low}, we have that
\begin{align*}
t_{2}&=\lambda_{\max}\big((I-\widetilde{M}^{-1}A)\big(I-\varPi_{A}+(1-r_{2})\varPi_{A}\big)\big)\\
&\geq\lambda_{\max}\big((I-\widetilde{M}^{-1}A)(I-\varPi_{A})\big)+(1-r_{2})\lambda_{\min}\big((I-\widetilde{M}^{-1}A)\varPi_{A}\big)\\
&=1-\frac{1}{K_{\rm TG}}
\end{align*}
and
\begin{align*}
t_{2}&=\lambda_{\max}\big((I-\widetilde{M}^{-1}A)(I-r_{2}\varPi_{A})\big)\\
&\geq\lambda_{\max}(I-\widetilde{M}^{-1}A)-r_{2}\lambda_{\max}\big((I-\widetilde{M}^{-1}A)\varPi_{A}\big)\\
&=1-\lambda_{\min}(\widetilde{M}^{-1}A)-r_{2}\big(1-\lambda_{\min}^{+}(\widetilde{M}^{-1}A\varPi_{A})\big).
\end{align*}
We then have
\begin{equation}\label{t2-low1}
t_{2}\geq 1-\min\bigg\{\frac{1}{K_{\rm TG}},\,\lambda_{\min}(\widetilde{M}^{-1}A)+r_{2}\big(1-\lambda_{\min}^{+}(\widetilde{M}^{-1}A\varPi_{A})\big)\bigg\}.
\end{equation}
Combining~\cref{two-side}--\cref{t2-low1}, we can arrive at the estimate~\cref{iTG1.1} immediately.

\textbf{Case 2:} $r_{1}\leq 1<r_{2}$. Note that the inequalities~\cref{s1-up1} and~\cref{t1-up1} still hold due to $r_{1}\leq 1$. We next focus on the lower bounds for $s_{2}$ and $t_{2}$. By~\cref{Weyl-min-low}, we have
\begin{align*}
s_{2}&=\lambda_{\min}\big((I-\widetilde{M}^{-1}A)\big(I-\varPi_{A}+(1-r_{2})\varPi_{A}\big)\big)\\
&\geq\lambda_{\min}\big((I-\widetilde{M}^{-1}A)(I-\varPi_{A})\big)+(1-r_{2})\lambda_{\max}\big((I-\widetilde{M}^{-1}A)\varPi_{A}\big),
\end{align*}
which, together with~\cref{eig1.1} and~\cref{eig1.4}, yields
\begin{equation}\label{s2-low2}
s_{2}\geq(1-r_{2})\big(1-\lambda_{\min}^{+}(\widetilde{M}^{-1}A\varPi_{A})\big).
\end{equation}
In light of~\cref{eig1.2}, \cref{eig1.3}, and~\cref{Weyl-max-low}, we have that
\begin{align*}
t_{2}&=\lambda_{\max}\big((I-\widetilde{M}^{-1}A)(I-r_{2}\varPi_{A})\big)\\
&\geq\lambda_{\min}(I-\widetilde{M}^{-1}A)-r_{2}\lambda_{\min}\big((I-\widetilde{M}^{-1}A)\varPi_{A}\big)\\
&=1-\lambda_{\max}(\widetilde{M}^{-1}A)
\end{align*}
and
\begin{align*}
t_{2}&=\lambda_{\max}\big((I-\widetilde{M}^{-1}A)\big((1-r_{2})I+r_{2}(I-\varPi_{A})\big)\big)\\
&\geq(1-r_{2})\lambda_{\max}(I-\widetilde{M}^{-1}A)+r_{2}\lambda_{\max}\big((I-\widetilde{M}^{-1}A)(I-\varPi_{A})\big)\\
&=1-\frac{r_{2}}{K_{\rm TG}}+(r_{2}-1)\lambda_{\min}(\widetilde{M}^{-1}A).
\end{align*}
Hence,
\begin{equation}\label{t2-low2}
t_{2}\geq 1-\min\bigg\{\lambda_{\max}(\widetilde{M}^{-1}A),\,\frac{r_{2}}{K_{\rm TG}}-(r_{2}-1)\lambda_{\min}(\widetilde{M}^{-1}A)\bigg\}.
\end{equation}
The estimate~\cref{iTG1.2} then follows by combining~\cref{two-side}, \cref{s1-up1}, \cref{t1-up1}, \cref{s2-low2}, and~\cref{t2-low2}.

\textbf{Case 3:} $1<r_{1}$. In this case, the estimates~\cref{s2-low2} and~\cref{t2-low2} still hold. We then consider the upper bounds for $s_{1}$ and $t_{1}$. Using~\cref{eig1.1}, \cref{eig1.4}, and~\cref{Weyl-min-up}, we get that
\begin{align*}
s_{1}&=\lambda_{\min}\big((I-\widetilde{M}^{-1}A)(I-r_{1}\varPi_{A})\big)\\
&\leq\lambda_{\max}(I-\widetilde{M}^{-1}A)-r_{1}\lambda_{\max}\big((I-\widetilde{M}^{-1}A)\varPi_{A}\big)\\
&=1-\lambda_{\min}(\widetilde{M}^{-1}A)-r_{1}\big(1-\lambda_{\min}^{+}(\widetilde{M}^{-1}A\varPi_{A})\big)
\end{align*}
and
\begin{align*}
s_{1}&=\lambda_{\min}\big((I-\widetilde{M}^{-1}A)\big((1-r_{1})I+r_{1}(I-\varPi_{A})\big)\big)\\
&\leq(1-r_{1})\lambda_{\min}(I-\widetilde{M}^{-1}A)+r_{1}\lambda_{\min}\big((I-\widetilde{M}^{-1}A)(I-\varPi_{A})\big)\\
&=1-r_{1}+(r_{1}-1)\lambda_{\max}(\widetilde{M}^{-1}A).
\end{align*}
Hence,
\begin{equation}\label{s1-up2}
s_{1}\leq 1-r_{1}+\min\Big\{r_{1}\lambda_{\min}^{+}(\widetilde{M}^{-1}A\varPi_{A})-\lambda_{\min}(\widetilde{M}^{-1}A),\,(r_{1}-1)\lambda_{\max}(\widetilde{M}^{-1}A)\Big\}.
\end{equation}
By~\cref{Weyl-max-up}, we have
\begin{align*}
t_{1}&=\lambda_{\max}\big((I-\widetilde{M}^{-1}A)\big(I-\varPi_{A}+(1-r_{1})\varPi_{A}\big)\big)\\
&\leq\lambda_{\max}\big((I-\widetilde{M}^{-1}A)(I-\varPi_{A})\big)+(1-r_{1})\lambda_{\min}\big((I-\widetilde{M}^{-1}A)\varPi_{A}\big),
\end{align*}
which, combined with~\cref{eig1.2} and~\cref{eig1.3}, gives
\begin{equation}\label{t1-up2}
t_{1}\leq 1-\frac{1}{K_{\rm TG}}.
\end{equation}
In light of~\cref{two-side} and~\cref{s2-low2}--\cref{t1-up2}, we conclude that the estimate~\cref{iTG1.3} is valid.
\end{proof}

\begin{remark}
In particular, if $B_{\rm c}=A_{\rm c}$, then the lower and upper bounds in~\cref{iTG1.1} become
\begin{align*}
\mathscr{L}_{1}&=1-\min\bigg\{\frac{1}{K_{\rm TG}},\,1+\lambda_{\min}(\widetilde{M}^{-1}A)-\lambda_{\min}^{+}(\widetilde{M}^{-1}A\varPi_{A})\bigg\},\\
\mathscr{U}_{1}&=1-\frac{1}{K_{\rm TG}}.
\end{align*}
By~\cref{eig1.2}, \cref{eig1.4}, and~\cref{Weyl-max-up}, we have
\begin{align*}
2-\frac{1}{K_{\rm TG}}-\lambda_{\min}^{+}(\widetilde{M}^{-1}A\varPi_{A})&\geq\lambda_{\max}\big((I-\widetilde{M}^{-1}A)(I-\varPi_{A})+(I-\widetilde{M}^{-1}A)\varPi_{A}\big)\\
&=\lambda_{\max}(I-\widetilde{M}^{-1}A)\\
&=1-\lambda_{\min}(\widetilde{M}^{-1}A),
\end{align*}
which yields
\begin{displaymath}
1+\lambda_{\min}(\widetilde{M}^{-1}A)-\lambda_{\min}^{+}(\widetilde{M}^{-1}A\varPi_{A})\geq\frac{1}{K_{\rm TG}}.
\end{displaymath}
Then
\begin{displaymath}
\mathscr{L}_{1}=1-\frac{1}{K_{\rm TG}}.
\end{displaymath}
Hence, the estimate~\cref{iTG1.1} will reduce to the identity~\cref{XZ} when $B_{\rm c}=A_{\rm c}$.
\end{remark}

\begin{remark}\label{comp-Notay}
With the notation in~\cref{r12}, the estimate~\cref{Notay1} reads
\begin{equation}\label{Notay2}
\|E_{\rm ITG}\|_{A}\leq\begin{cases}
1-\frac{r_{1}}{K_{\rm TG}} & \text{if $r_{2}\leq 1$},\\[2pt]
\max\big\{1-\frac{r_{1}}{K_{\rm TG}},\,r_{2}-1\big\} & \text{if $r_{1}\leq 1<r_{2}$},\\[2pt]
\max\big\{1-\frac{1}{K_{\rm TG}},\,r_{2}-1\big\} & \text{if $1<r_{1}$}.
\end{cases}
\end{equation}
It is easy to see that the upper bounds in~\cref{iTG1.1}--\cref{iTG1.3} are smaller than that in~\cref{Notay2}. On the other hand, if $B_{\rm c}=\alpha I_{n_{\rm c}}$ with $\alpha>0$, then
\begin{displaymath}
\lim_{\alpha\rightarrow+\infty}r_{1}=\lim_{\alpha\rightarrow+\infty}\frac{\lambda_{\min}(A_{\rm c})}{\alpha}=0 \quad \text{and} \quad \lim_{\alpha\rightarrow+\infty}r_{2}=\lim_{\alpha\rightarrow+\infty}\frac{\lambda_{\max}(A_{\rm c})}{\alpha}=0.
\end{displaymath}
One can readily check that both $\mathscr{L}_{1}$ and $\mathscr{U}_{1}$ tend to $1-\lambda_{\min}(\widetilde{M}^{-1}A)$ as $\alpha\rightarrow+\infty$, which is exactly the convergence factor of the limiting algorithm. That is, the estimate~\cref{iTG1.1} has fixed the defect of~\cref{Notay1} indicated at the outset of this section. Besides improved upper bounds, \cref{thm:iTG} provides new lower bounds for $\|E_{\rm ITG}\|_{A}$, which give necessary conditions for a fast convergence speed.
\end{remark}

As mentioned earlier, the Galerkin coarse-grid matrix may affect the parallel efficiency of algebraic multigrid algorithms. To improve the parallel performance, Falgout and Schroder~\cite{Falgout2014} proposed a non-Galerkin coarsening strategy, which is motivated by the following result. Define
\begin{equation}\label{cond-FS}
\theta:=\big\|I-A_{\rm c}^{-1}B_{\rm c}\big\|_{2},
\end{equation}
where $B_{\rm c}\in\mathbb{R}^{n_{\rm c}\times n_{\rm c}}$ is a general SPD approximation to $A_{\rm c}$. If $\theta<1$, then
\begin{equation}\label{FS1}
\kappa_{A}\big(B_{\rm ITG}^{-1}A\big)\leq\frac{1+\theta}{1-\theta}\kappa_{A}\big(B_{\rm TG}^{-1}A\big)=\frac{1+\theta}{1-\theta}K_{\rm TG}
\end{equation}
and
\begin{equation}\label{FS2}
\|E_{\rm ITG}\|_{A}\leq\max\bigg\{\frac{\theta}{1-\theta},\,1-\frac{1}{(1+\theta)K_{\rm TG}}\bigg\}.
\end{equation}

The definition~\cref{cond-FS} implies that
\begin{displaymath}
\theta\geq\rho\big(I-A_{\rm c}^{-1}B_{\rm c}\big)=\max\big\{\lambda_{\max}\big(A_{\rm c}^{-1}B_{\rm c}\big)-1,\,1-\lambda_{\min}\big(A_{\rm c}^{-1}B_{\rm c}\big)\big\},
\end{displaymath}
and hence
\begin{displaymath}
1-\theta\leq\lambda_{\min}\big(A_{\rm c}^{-1}B_{\rm c}\big)\leq\lambda_{\max}\big(A_{\rm c}^{-1}B_{\rm c}\big)\leq 1+\theta.
\end{displaymath}
With the notation in~\cref{r12}, we have
\begin{displaymath}
\frac{1}{1+\theta}\leq r_{1}\leq r_{2}\leq\frac{1}{1-\theta},
\end{displaymath}
which contains the following three cases:
\begin{align*}
&\texttt{C}_\texttt{1}: \ \frac{1}{1+\theta}\leq r_{1}\leq r_{2}\leq 1;\\
&\texttt{C}_\texttt{2}: \ \frac{1}{1+\theta}\leq r_{1}\leq 1<r_{2}\leq\frac{1}{1-\theta};\\
&\texttt{C}_\texttt{3}: \ 1<r_{1}\leq r_{2}\leq\frac{1}{1-\theta}.
\end{align*}
From~\cref{max-low-up} and~\cref{min-low-up}, we have
\begin{displaymath}
\lambda\big(B_{\rm ITG}^{-1}A\big)\subset[1-t_{1},\,1-s_{2}]\subset(0,\,+\infty),
\end{displaymath}
where we have used the facts~\cref{t1-up1} and~\cref{t1-up2}. Then
\begin{displaymath}
\kappa_{A}\big(B_{\rm ITG}^{-1}A\big)=\frac{\lambda_{\max}\big(B_{\rm ITG}^{-1}A\big)}{\lambda_{\min}\big(B_{\rm ITG}^{-1}A\big)}\leq\frac{1-s_{2}}{1-t_{1}}.
\end{displaymath}
According to~\cref{s2-low1}, \cref{t1-up1}, \cref{s2-low2}, and~\cref{t1-up2}, we deduce that
\begin{equation}\label{imp-FS1}
\kappa_{A}\big(B_{\rm ITG}^{-1}A\big)\leq\begin{cases}
\frac{1-\theta}{1+\theta K_{\rm TG}\lambda_{\min}(\widetilde{M}^{-1}A)}\cdot\frac{1+\theta}{1-\theta}K_{\rm TG} & \text{if $\texttt{C}_\texttt{1}$ holds},\\[7pt]
\frac{1-\theta\lambda_{\min}^{+}(\widetilde{M}^{-1}A\varPi_{A})}{1+\theta K_{\rm TG}\lambda_{\min}(\widetilde{M}^{-1}A)}\cdot\frac{1+\theta}{1-\theta}K_{\rm TG} & \text{if $\texttt{C}_\texttt{2}$ holds},\\[7pt]
\frac{1-\theta\lambda_{\min}^{+}(\widetilde{M}^{-1}A\varPi_{A})}{1+\theta}\cdot\frac{1+\theta}{1-\theta}K_{\rm TG} & \text{if $\texttt{C}_\texttt{3}$ holds}.
\end{cases}
\end{equation}
Furthermore, using~\cref{iTG1.1}--\cref{iTG1.3}, we obtain that
\begin{equation}\label{imp-FS2}
\|E_{\rm ITG}\|_{A}\leq\begin{cases}
1-\frac{1+\theta K_{\rm TG}\lambda_{\min}(\widetilde{M}^{-1}A)}{(1+\theta)K_{\rm TG}} & \text{if $\texttt{C}_\texttt{1}$ holds},\\[6pt]
\max\Big\{1-\frac{1+\theta K_{\rm TG}\lambda_{\min}(\widetilde{M}^{-1}A)}{(1+\theta)K_{\rm TG}},\,\frac{\theta-\theta\lambda_{\min}^{+}(\widetilde{M}^{-1}A\varPi_{A})}{1-\theta}\Big\} & \text{if $\texttt{C}_\texttt{2}$ holds},\\[6pt]
\max\Big\{1-\frac{1}{K_{\rm TG}},\,\frac{\theta-\theta\lambda_{\min}^{+}(\widetilde{M}^{-1}A\varPi_{A})}{1-\theta}\Big\} & \text{if $\texttt{C}_\texttt{3}$ holds}.
\end{cases}
\end{equation}
It is easy to see that the estimates~\cref{imp-FS1} and~\cref{imp-FS2} are sharper than~\cref{FS1} and~\cref{FS2}, respectively.

\section{An application of inexact two-grid theory} \label{sec:MG}

In practice, it is often too costly to solve the Galerkin coarse-grid system exactly, especially when its size is large. Instead, without essential loss of convergence speed, one may replace the coarse-grid matrix by a suitable approximation. A natural way to obtain such an approximation is to apply~\cref{alg:TG} recursively in the coarse-grid correction steps. To describe the resulting (multigrid) algorithm conveniently, we give some notation and assumptions.

\begin{itemize}

\item The algorithm involves $L+1$ levels with indices $0,\ldots,L$, where $0$ and $L$ correspond to the coarsest- and finest-levels, respectively.

\item $n_{k}$ denotes the number of unknowns at level $k$ ($n=n_{L}>n_{L-1}>\cdots>n_{0}$).

\item For each $k=1,\ldots,L$, $P_{k}\in\mathbb{R}^{n_{k}\times n_{k-1}}$ denotes a prolongation matrix from level $k-1$ to level $k$, and ${\rm rank}(P_{k})=n_{k-1}$.

\item Let $A_{L}=A$. For each $k=0,\ldots,L-1$, $A_{k}:=P_{k+1}^{T}A_{k+1}P_{k+1}$ denotes the Galerkin coarse-grid matrix at level $k$.

\item Let $\hat{A}_{0}\in\mathbb{R}^{n_{0}\times n_{0}}$ be an SPD approximation to $A_{0}$, and let $\hat{A}_{0}-A_{0}$ be SPSD.

\item For each $k=1,\ldots,L$, $M_{k}\in\mathbb{R}^{n_{k}\times n_{k}}$ denotes a nonsingular smoother at level $k$ with $M_{k}+M_{k}^{T}-A_{k}$ being SPD (or, equivalently, $\|I-M_{k}^{-1}A_{k}\|_{A_{k}}<1$).

\item $\gamma$ denotes the \textit{cycle index} involved in the coarse-grid correction steps.

\end{itemize}

Given an initial guess $\mathbf{u}_{k}^{(0)}\in\mathbb{R}^{n_{k}}$, the standard multigrid scheme for solving the linear system $A_{k}\mathbf{u}_{k}=\mathbf{f}_{k}$ (with $\mathbf{f}_{k}\in\mathbb{R}^{n_{k}}$) can be described by~\cref{alg:MG}. The symbol $\text{MG}^{\gamma}$ in~\cref{alg:MG} means that the multigrid scheme will be carried out $\gamma$ iterations. In particular, $\gamma=1$ corresponds to the V-cycle and $\gamma=2$ to the W-cycle.

\begin{algorithm}[!htbp]

\caption{\ \textbf{Multigrid method at level $k$}: $\mathbf{u}_{\rm IMG}\gets\textbf{MG}\big(k, A_{k}, \mathbf{f}_{k}, \mathbf{u}_{k}^{(0)}\big)$} \label{alg:MG}

\smallskip

\begin{algorithmic}[1]
	
\STATE Presmoothing: $\mathbf{u}_{k}^{(1)}\gets\mathbf{u}_{k}^{(0)}+M_{k}^{-1}\big(\mathbf{f}_{k}-A_{k}\mathbf{u}_{k}^{(0)}\big)$

\smallskip

\STATE Restriction: $\mathbf{r}_{k-1}\gets P_{k}^{T}\big(\mathbf{f}_{k}-A_{k}\mathbf{u}_{k}^{(1)}\big)$

\smallskip

\STATE Coarse-grid correction: $\mathbf{e}_{k-1}\gets\begin{cases}
\hat{A}_{0}^{-1}\mathbf{r}_{0} & \text{if $k=1$}, \\
\textbf{MG}^{\gamma}\big(k-1, A_{k-1}, \mathbf{r}_{k-1}, \mathbf{0}\big) & \text{if $k>1$}.
\end{cases}$

\smallskip

\STATE Prolongation: $\mathbf{u}_{k}^{(2)}\gets\mathbf{u}_{k}^{(1)}+P_{k}\mathbf{e}_{k-1}$

\smallskip

\STATE Postsmoothing: $\mathbf{u}_{\rm IMG}\gets\mathbf{u}_{k}^{(2)}+M_{k}^{-T}\big(\mathbf{f}_{k}-A_{k}\mathbf{u}_{k}^{(2)}\big)$

\smallskip

\end{algorithmic}

\end{algorithm}

The iteration matrix of~\cref{alg:MG} is
\begin{equation}\label{tE-MGk1}
E_{\rm IMG}^{(k)}=\big(I-M_{k}^{-T}A_{k}\big)\Big[I-P_{k}\Big(I-\big(E_{\rm IMG}^{(k-1)}\big)^{\gamma}\Big)A_{k-1}^{-1}P_{k}^{T}A_{k}\Big]\big(I-M_{k}^{-1}A_{k}\big),
\end{equation}
which satisfies
\begin{displaymath}
\mathbf{u}_{k}-\mathbf{u}_{\rm IMG}=E_{\rm IMG}^{(k)}\big(\mathbf{u}_{k}-\mathbf{u}_{k}^{(0)}\big).
\end{displaymath}
In particular,
\begin{displaymath}
E_{\rm IMG}^{(1)}=\big(I-M_{1}^{-T}A_{1}\big)\big(I-P_{1}\hat{A}_{0}^{-1}P_{1}^{T}A_{1}\big)\big(I-M_{1}^{-1}A_{1}\big).
\end{displaymath}
By~\cref{tE-MGk1}, we have
\begin{displaymath}
A_{k}^{\frac{1}{2}}E_{\rm IMG}^{(k)}A_{k}^{-\frac{1}{2}}=N_{k}^{T}\Big[I-A_{k}^{\frac{1}{2}}P_{k}A_{k-1}^{-\frac{1}{2}}\Big(I-\big(A_{k-1}^{\frac{1}{2}}E_{\rm IMG}^{(k-1)}A_{k-1}^{-\frac{1}{2}}\big)^{\gamma}\Big)A_{k-1}^{-\frac{1}{2}}P_{k}^{T}A_{k}^{\frac{1}{2}}\Big]N_{k}
\end{displaymath}
with
\begin{displaymath}
N_{k}=I-A_{k}^{\frac{1}{2}}M_{k}^{-1}A_{k}^{\frac{1}{2}}.
\end{displaymath}
By induction, one can show that $A_{k}^{\frac{1}{2}}E_{\rm IMG}^{(k)}A_{k}^{-\frac{1}{2}}$ is symmetric and
\begin{displaymath}
\lambda\big(E_{\rm IMG}^{(k)}\big)=\lambda\Big(A_{k}^{\frac{1}{2}}E_{\rm IMG}^{(k)}A_{k}^{-\frac{1}{2}}\Big)\subset[0,1) \quad \forall\,k=1,\ldots,L,
\end{displaymath}
which lead to
\begin{displaymath}
\big\|E_{\rm IMG}^{(k)}\big\|_{A_{k}}<1.
\end{displaymath}
As a result, $E_{\rm IMG}^{(k)}$ can be expressed as
\begin{equation}\label{tE-MGk2}
E_{\rm IMG}^{(k)}=I-B_{k}^{-1}A_{k},
\end{equation}
where $B_{k}\in\mathbb{R}^{n_{k}\times n_{k}}$ is SPD and $B_{k}-A_{k}$ is SPSD. Combining~\cref{tE-MGk1} and~\cref{tE-MGk2}, we can obtain the recursive relation
\begin{displaymath}
B_{k}^{-1}=\overline{M}_{k}^{-1}+\big(I-M_{k}^{-T}A_{k}\big)P_{k}\Bigg(\sum_{j=0}^{\gamma-1}\big(I-B_{k-1}^{-1}A_{k-1}\big)^{j}\Bigg)B_{k-1}^{-1}P_{k}^{T}\big(I-A_{k}M_{k}^{-1}\big),
\end{displaymath}
where
\begin{equation}\label{barMk}
\overline{M}_{k}:=M_{k}\big(M_{k}+M_{k}^{T}-A_{k}\big)^{-1}M_{k}^{T}.
\end{equation}
Interchanging the roles of $M_{k}$ and $M_{k}^{T}$ in~\cref{barMk} yields another symmetrized smoother:
\begin{equation}\label{tildMk}
\widetilde{M}_{k}:=M_{k}^{T}\big(M_{k}+M_{k}^{T}-A_{k}\big)^{-1}M_{k}.
\end{equation}
It is easy to verify that both $\overline{M}_{k}-A_{k}$ and $\widetilde{M}_{k}-A_{k}$ are SPSD.

Comparing~\cref{tE-MGk1} with~\cref{tE_TG1}, we can see that~\cref{alg:MG} is essentially an inexact two-grid method with $M=M_{k}$, $A=A_{k}$, $P=P_{k}$, and
\begin{equation}\label{Bc}
B_{\rm c}=A_{k-1}\Big(I-\big(E_{\rm IMG}^{(k-1)}\big)^{\gamma}\Big)^{-1}.
\end{equation}

\begin{remark}
From~\cref{Bc}, we have
\begin{align*}
B_{\rm c}&=A_{k-1}^{\frac{1}{2}}\Big(A_{k-1}^{-\frac{1}{2}}-\big(E_{\rm IMG}^{(k-1)}\big)^{\gamma}A_{k-1}^{-\frac{1}{2}}\Big)^{-1}\\
&=A_{k-1}^{\frac{1}{2}}\Big(A_{k-1}^{-\frac{1}{2}}-A_{k-1}^{-\frac{1}{2}}A_{k-1}^{\frac{1}{2}}\big(E_{\rm IMG}^{(k-1)}\big)^{\gamma}A_{k-1}^{-\frac{1}{2}}\Big)^{-1}\\
&=A_{k-1}^{\frac{1}{2}}\Big(I-A_{k-1}^{\frac{1}{2}}\big(E_{\rm IMG}^{(k-1)}\big)^{\gamma}A_{k-1}^{-\frac{1}{2}}\Big)^{-1}A_{k-1}^{\frac{1}{2}}\\
&=A_{k-1}^{\frac{1}{2}}\Big[I-\Big(A_{k-1}^{\frac{1}{2}}E_{\rm IMG}^{(k-1)}A_{k-1}^{-\frac{1}{2}}\Big)^{\gamma}\Big]^{-1}A_{k-1}^{\frac{1}{2}}.
\end{align*}
Due to the fact that $A_{k-1}^{\frac{1}{2}}E_{\rm IMG}^{(k-1)}A_{k-1}^{-\frac{1}{2}}$ is symmetric and $\lambda\big(E_{\rm IMG}^{(k-1)}\big)\subset[0,1)$, $B_{\rm c}$ given by~\cref{Bc} is SPD.
\end{remark}

Define
\begin{align*}
\sigma_{\rm TG}^{(k)}:=\big\|E_{\rm TG}^{(k)}\big\|_{A_{k}} \quad \text{and} \quad \sigma_{\rm IMG}^{(k)}:=\big\|E_{\rm IMG}^{(k)}\big\|_{A_{k}},
\end{align*}
which are the convergence factors of the (exact) two-grid method and (inexact) multigrid method at level $k$, respectively. In view of~\cref{r12} and~\cref{Bc}, we have
\begin{align*}
r_{1}&=\lambda_{\min}\Big(I-\big(E_{\rm IMG}^{(k-1)}\big)^{\gamma}\Big)=1-\Big(\lambda_{\max}\big(E_{\rm IMG}^{(k-1)}\big)\Big)^{\gamma}=1-\big(\sigma_{\rm IMG}^{(k-1)}\big)^{\gamma},\\
r_{2}&=\lambda_{\max}\Big(I-\big(E_{\rm IMG}^{(k-1)}\big)^{\gamma}\Big)=1-\Big(\lambda_{\min}\big(E_{\rm IMG}^{(k-1)}\big)\Big)^{\gamma}\leq 1.
\end{align*}
Using~\cref{iTG1.1}, we obtain
\begin{displaymath}
\sigma_{\rm IMG}^{(k)}\leq 1-\frac{1}{K_{\rm TG}^{(k)}}+\big(\sigma_{\rm IMG}^{(k-1)}\big)^{\gamma}\Bigg(\frac{1}{K_{\rm TG}^{(k)}}-\lambda_{\min}(\widetilde{M}_{k}^{-1}A_{k})\Bigg),
\end{displaymath}
where
\begin{equation}\label{K-TG-k}
K_{\rm TG}^{(k)}=\max_{\mathbf{v}_{k}\in\mathbb{R}^{n_{k}}\backslash\{0\}}\frac{\big\|\big(I-\varPi_{\widetilde{M}_{k}}\big)\mathbf{v}_{k}\big\|_{\widetilde{M}_{k}}^{2}}{\|\mathbf{v}_{k}\|_{A_{k}}^{2}} \ \ \text{with} \ \ \varPi_{\widetilde{M}_{k}}=P_{k}(P_{k}^{T}\widetilde{M}_{k}P_{k})^{-1}P_{k}^{T}\widetilde{M}_{k}.
\end{equation}
It follows that
\begin{equation}\label{upper}
\sigma_{\rm IMG}^{(k)}\leq\sigma_{\rm TG}^{(k)}+\big(\sigma_{\rm IMG}^{(k-1)}\big)^{\gamma}\Big(1-\sigma_{\rm TG}^{(k)}-\lambda_{\min}(\widetilde{M}_{k}^{-1}A_{k})\Big),
\end{equation}
where we have used the fact $\sigma_{\rm TG}^{(k)}=1-\frac{1}{K_{\rm TG}^{(k)}}$.

\begin{remark}
The lower bound in~\cref{iTG1.1} yields
\begin{equation}\label{lower}
\sigma_{\rm IMG}^{(k)}\geq\sigma_{\rm TG}^{(k)}.
\end{equation}
Thus, a well converged multigrid method entails that the corresponding (exact) two-grid method has a fast convergence speed.
\end{remark}

Define
\begin{align}
\sigma_{\s L}&:=\max_{1\leq k\leq L}\sigma_{\rm TG}^{(k)},\label{sigL}\\
\varepsilon_{\s L}&:=\min_{1\leq k\leq L}\lambda_{\min}(\widetilde{M}_{k}^{-1}A_{k}).\label{epsL}
\end{align}
In view of~\cref{K-TG-k} and~\cref{epsL}, we have
\begin{displaymath}
K_{\rm TG}^{(k)}=\lambda_{\max}\big(A_{k}^{-1}\widetilde{M}_{k}\big(I-\varPi_{\widetilde{M}_{k}}\big)\big)\leq\lambda_{\max}(A_{k}^{-1}\widetilde{M}_{k})=\frac{1}{\lambda_{\min}(\widetilde{M}_{k}^{-1}A_{k})}\leq\frac{1}{\varepsilon_{\s L}}.
\end{displaymath}
Then
\begin{displaymath}
\sigma_{\rm TG}^{(k)}=1-\frac{1}{K_{\rm TG}^{(k)}}\leq 1-\varepsilon_{\s L} \quad \forall\,k=1,\ldots,L,
\end{displaymath}
and hence
\begin{displaymath}
0\leq\sigma_{\s L}\leq 1-\varepsilon_{\s L}.
\end{displaymath}
We remark that the extreme cases $\sigma_{\s L}=0$ and $\sigma_{\s L}=1-\varepsilon_{\s L}$ seldom occur in practice. In what follows, we only consider the nontrivial case
\begin{equation}
0<\sigma_{\s L}<1-\varepsilon_{\s L}.
\end{equation}

To analyze the convergence of~\cref{alg:MG}, we first prove a technical lemma.

\begin{lemma}\label{lem:MG}
Let $\sigma_{\s L}$ and $\varepsilon_{\s L}$ be defined by~\cref{sigL} and~\cref{epsL}, respectively. Then
\begin{equation}\label{x-gam}
(1-\sigma_{\s L}-\varepsilon_{\s L})x^{\gamma}-x+\sigma_{\s L}=0 \quad (0<x<1)
\end{equation}
has a unique root $x_{\gamma}$ in $\big(\sigma_{\s L},\frac{\sigma_{\s L}}{\sigma_{\s L}+\varepsilon_{\s L}}\big]$, and $\{x_{\gamma}\}_{\gamma=1}^{+\infty}$ is a strictly decreasing sequence with limit $\sigma_{\s L}$.
\end{lemma}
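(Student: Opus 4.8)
The plan is to study the scalar function
\begin{displaymath}
f_{\gamma}(x):=(1-\sigma_{\s L}-\varepsilon_{\s L})x^{\gamma}-x+\sigma_{\s L},
\end{displaymath}
and abbreviate $c:=1-\sigma_{\s L}-\varepsilon_{\s L}$, which is strictly positive by the standing assumption $0<\sigma_{\s L}<1-\varepsilon_{\s L}$. First I would establish existence and localization by evaluating $f_{\gamma}$ at the two endpoints of the target interval. At the left endpoint, $f_{\gamma}(\sigma_{\s L})=c\,\sigma_{\s L}^{\gamma}>0$. At the right endpoint $x^{\ast}:=\frac{\sigma_{\s L}}{\sigma_{\s L}+\varepsilon_{\s L}}$, a short computation using $-x^{\ast}+\sigma_{\s L}=-c\,x^{\ast}$ gives $f_{\gamma}(x^{\ast})=c\big((x^{\ast})^{\gamma}-x^{\ast}\big)\le 0$, with equality precisely when $\gamma=1$ (note $0<x^{\ast}<1$ since $\varepsilon_{\s L}>0$). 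The intermediate value theorem then yields a root in $(\sigma_{\s L},x^{\ast}]$; for $\gamma=1$ the equation is affine and this root is exactly $x^{\ast}$.

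The delicate point is uniqueness, because $f_{\gamma}$ need not be monotone on all of $(0,1)$: when $c\gamma>1$ it possesses an interior minimum. I would resolve this through convexity. Since $f_{\gamma}''(x)=c\gamma(\gamma-1)x^{\gamma-2}>0$ for $\gamma\ge 2$ (and $f_{1}$ is affine and strictly decreasing), $f_{\gamma}$ is strictly convex on $[0,1]$, so $f_{\gamma}'$ is increasing with $f_{\gamma}'(0^{+})=-1<0$. Hence $f_{\gamma}$ first strictly decreases to a unique minimizer and then increases. Using $f_{\gamma}(0)=\sigma_{\s L}>0$ and $f_{\gamma}(1)=-\varepsilon_{\s L}<0$, the minimum value is negative and, on the increasing branch, $f_{\gamma}\le f_{\gamma}(1)<0$; therefore $f_{\gamma}$ vanishes exactly once on $(0,1)$. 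Combined with the localization above, this unique root is the desired $x_{\gamma}\in(\sigma_{\s L},x^{\ast}]$.

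For the monotonicity of $\{x_{\gamma}\}$ the key identity is the telescoping comparison
\begin{displaymath}
f_{\gamma+1}(x)-f_{\gamma}(x)=c\,x^{\gamma}(x-1)<0 \qquad \forall\,x\in(0,1),
\end{displaymath}
so in particular $f_{\gamma+1}(x_{\gamma})<f_{\gamma}(x_{\gamma})=0$. Together with $f_{\gamma+1}(\sigma_{\s L})>0$ and $\sigma_{\s L}<x_{\gamma}$, the intermediate value theorem places a root of $f_{\gamma+1}$ strictly inside $(\sigma_{\s L},x_{\gamma})$; by the just-proved uniqueness this root is $x_{\gamma+1}$, whence $x_{\gamma+1}<x_{\gamma}$. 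Finally, for the limit I would use the defining relation $x_{\gamma}-\sigma_{\s L}=c\,x_{\gamma}^{\gamma}$ together with the uniform bound $x_{\gamma}\le x^{\ast}<1$ to get $0<x_{\gamma}-\sigma_{\s L}\le c\,(x^{\ast})^{\gamma}\to 0$, so the decreasing, bounded-below sequence converges to $\sigma_{\s L}$. The main obstacle is precisely the uniqueness step, since without exploiting convexity one cannot a priori exclude a second root on the increasing branch of $f_{\gamma}$.
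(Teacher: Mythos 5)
Your proof is correct and follows essentially the same route as the paper: sign checks at $\sigma_{\s L}$ and $\sigma_{\s L}/(\sigma_{\s L}+\varepsilon_{\s L})$ for existence, a monotonicity analysis of $F_{\gamma}$ together with $F_{\gamma}(1)<0$ for uniqueness, the comparison $F_{\gamma+1}(x_{\gamma})<F_{\gamma}(x_{\gamma})=0$ for strict decrease, and the defining relation for the limit. The only cosmetic difference is that you derive the decreasing-then-increasing shape from strict convexity of $F_{\gamma}$ (and make the limit quantitative via $x_{\gamma}-\sigma_{\s L}\leq c\,(x^{\ast})^{\gamma}$), whereas the paper splits into the two cases $\gamma(1-\sigma_{\s L}-\varepsilon_{\s L})\leq 1$ and $>1$ by examining $F_{\gamma}'$ directly.
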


\begin{proof}
When $\gamma=1$, one can readily see that $\frac{\sigma_{\s L}}{\sigma_{\s L}+\varepsilon_{\s L}}$ is the root of~\cref{x-gam}. Let
\begin{displaymath}
F_{\gamma}(x)=(1-\sigma_{\s L}-\varepsilon_{\s L})x^{\gamma}-x+\sigma_{\s L} \quad (\gamma\geq 2).
\end{displaymath}
Then
\begin{displaymath}
\frac{\mathrm{d}F_{\gamma}(x)}{\mathrm{d}x}=\gamma(1-\sigma_{\s L}-\varepsilon_{\s L})x^{\gamma-1}-1.
\end{displaymath}

\begin{itemize}

\item If $\gamma(1-\sigma_{\s L}-\varepsilon_{\s L})\leq 1$, then $\frac{\mathrm{d}F_{\gamma}(x)}{\mathrm{d}x}<0$ in $(0,1)$, that is, $F_{\gamma}(x)$ is a strictly decreasing function in $(0,1)$. Due to
\begin{align*}
F_{\gamma}(\sigma_{\s L})&=(1-\sigma_{\s L}-\varepsilon_{\s L})\sigma_{\s L}^{\gamma}>0,\\
F_{\gamma}\bigg(\frac{\sigma_{\s L}}{\sigma_{\s L}+\varepsilon_{\s L}}\bigg)&<(1-\sigma_{\s L}-\varepsilon_{\s L})\frac{\sigma_{\s L}}{\sigma_{\s L}+\varepsilon_{\s L}}-\frac{\sigma_{\s L}}{\sigma_{\s L}+\varepsilon_{\s L}}+\sigma_{\s L}=0,
\end{align*}
it follows that $F_{\gamma}(x)=0$ has a unique root $x_{\gamma}$ in $\big(\sigma_{\s L},\frac{\sigma_{\s L}}{\sigma_{\s L}+\varepsilon_{\s L}}\big)$.

\item If $\gamma(1-\sigma_{\s L}-\varepsilon_{\s L})>1$, then
\begin{displaymath}
\begin{cases}
\frac{\mathrm{d}F_{\gamma}(x)}{\mathrm{d}x}<0 & \text{if} \ \, 0<x<\big(\gamma(1-\sigma_{\s L}-\varepsilon_{\s L})\big)^{\frac{1}{1-\gamma}},\\[2pt]
\frac{\mathrm{d}F_{\gamma}(x)}{\mathrm{d}x}>0 & \text{if} \ \, \big(\gamma(1-\sigma_{\s L}-\varepsilon_{\s L})\big)^{\frac{1}{1-\gamma}}<x<1.
\end{cases}
\end{displaymath}
The existence and uniqueness of $x_{\gamma}\in\big(\sigma_{\s L},\frac{\sigma_{\s L}}{\sigma_{\s L}+\varepsilon_{\s L}}\big)$ follow immediately from the facts $F_{\gamma}(\sigma_{\s L})>0$, $F_{\gamma}\big(\frac{\sigma_{\s L}}{\sigma_{\s L}+\varepsilon_{\s L}}\big)<0$, and $F_{\gamma}(1)<0$.

\end{itemize}

Since $x_{\gamma}<1$, it holds that
\begin{displaymath}
F_{\gamma+1}(x_{\gamma})=(1-\sigma_{\s L}-\varepsilon_{\s L})x_{\gamma}^{\gamma+1}-x_{\gamma}+\sigma_{\s L}<F_{\gamma}(x_{\gamma})=0,
\end{displaymath}
which, together with $F_{\gamma+1}(\sigma_{\s L})>0$, yields
\begin{displaymath}
\sigma_{\s L}<x_{\gamma+1}<x_{\gamma}.
\end{displaymath}
In addition, we deduce from $F_{\gamma}(x_{\gamma})=0$ that
\begin{displaymath}
x_{\gamma}=(1-\sigma_{\s L}-\varepsilon_{\s L})x_{\gamma}^{\gamma}+\sigma_{\s L},
\end{displaymath}
which leads to
\begin{displaymath}
\lim\limits_{\gamma\rightarrow+\infty}x_{\gamma}=\sigma_{\s L}.
\end{displaymath}
This completes the proof.
\end{proof}

Using~\cref{iTG1.1}, \cref{upper}, and~\cref{lem:MG}, we can obtain the following convergence estimate.

\begin{theorem}\label{thm:MG1}
Let $\sigma_{\s L}$ and $\varepsilon_{\s L}$ be defined by~\cref{sigL} and~\cref{epsL}, respectively. Let $x_{\gamma}$ be the (unique) root of~\cref{x-gam} contained in $\big(\sigma_{\s L},\frac{\sigma_{\s L}}{\sigma_{\s L}+\varepsilon_{\s L}}\big]$. If
\begin{equation}\label{condMG1}
\lambda\big(\hat{A}_{0}^{-1}A_{0}\big)\subset\bigg[\frac{1-\varepsilon_{\s L}-x_{\gamma}}{1-\varepsilon_{\s L}-\sigma_{\s L}},\,1\bigg],
\end{equation}
then
\begin{equation}\label{estMG1}
\sigma_{\rm IMG}^{(k)}\leq x_{\gamma} \quad \forall\,k=1,\ldots,L.
\end{equation}
\end{theorem}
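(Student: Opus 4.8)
The plan is to prove the bound $\sigma_{\rm IMG}^{(k)}\leq x_{\gamma}$ by induction on the level $k$, using the recursive estimate \cref{upper} for the inductive step and the case-(i) upper bound of \cref{thm:iTG} for the base case. Throughout I would exploit the monotonicity of the relevant right-hand sides in the level-dependent quantities $\sigma_{\rm TG}^{(k)}$ and $\lambda_{\min}(\widetilde{M}_{k}^{-1}A_{k})$, so that these may be replaced by their worst-case surrogates $\sigma_{\s L}$ and $\varepsilon_{\s L}$ from \cref{sigL,epsL}; the defining identity of $x_{\gamma}$ then closes each step.

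For the base case $k=1$, I would observe that $E_{\rm IMG}^{(1)}$ is exactly the iteration matrix of an inexact two-grid method with $A=A_{1}$, $M=M_{1}$, $P=P_{1}$, and coarse matrix $B_{\rm c}=\hat{A}_{0}$. Since $\hat{A}_{0}$ is SPD and $\hat{A}_{0}-A_{0}$ is SPSD, we have $r_{2}=\lambda_{\max}(\hat{A}_{0}^{-1}A_{0})\leq 1$, so case (i) of \cref{thm:iTG} applies and $\sigma_{\rm IMG}^{(1)}\leq\mathscr{U}_{1}=1-r_{1}/K_{\rm TG}^{(1)}-(1-r_{1})\lambda_{\min}(\widetilde{M}_{1}^{-1}A_{1})$ with $r_{1}=\lambda_{\min}(\hat{A}_{0}^{-1}A_{0})\leq 1$. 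Writing $1/K_{\rm TG}^{(1)}=1-\sigma_{\rm TG}^{(1)}$ and using $\sigma_{\rm TG}^{(1)}\leq\sigma_{\s L}$ together with $\lambda_{\min}(\widetilde{M}_{1}^{-1}A_{1})\geq\varepsilon_{\s L}$ (the coefficient $-(1-r_{1})$ being nonpositive), I would reduce the bound to $\mathscr{U}_{1}\leq 1-\varepsilon_{\s L}-r_{1}(1-\sigma_{\s L}-\varepsilon_{\s L})$. Because $1-\sigma_{\s L}-\varepsilon_{\s L}>0$, this expression is decreasing in $r_{1}$, so the lower bound on $r_{1}$ supplied by \cref{condMG1}, namely $r_{1}\geq(1-\varepsilon_{\s L}-x_{\gamma})/(1-\sigma_{\s L}-\varepsilon_{\s L})$, yields precisely $\sigma_{\rm IMG}^{(1)}\leq x_{\gamma}$.

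For the inductive step, assume $\sigma_{\rm IMG}^{(k-1)}\leq x_{\gamma}$ for some $k\geq 2$. The estimate \cref{upper} reads $\sigma_{\rm IMG}^{(k)}\leq\sigma_{\rm TG}^{(k)}+(\sigma_{\rm IMG}^{(k-1)})^{\gamma}\big(1-\sigma_{\rm TG}^{(k)}-\lambda_{\min}(\widetilde{M}_{k}^{-1}A_{k})\big)$. Here I would first note that the amplification coefficient $1-\sigma_{\rm TG}^{(k)}-\lambda_{\min}(\widetilde{M}_{k}^{-1}A_{k})$ is nonnegative, since $\sigma_{\rm TG}^{(k)}=1-1/K_{\rm TG}^{(k)}\leq 1-\lambda_{\min}(\widetilde{M}_{k}^{-1}A_{k})$ by the already-established inequality $K_{\rm TG}^{(k)}\leq 1/\lambda_{\min}(\widetilde{M}_{k}^{-1}A_{k})$. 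Consequently the right-hand side is increasing in $\sigma_{\rm IMG}^{(k-1)}$, and the induction hypothesis permits replacing it by $x_{\gamma}$. A final monotonicity check—the resulting expression is increasing in $\sigma_{\rm TG}^{(k)}$ (coefficient $1-x_{\gamma}^{\gamma}>0$) and decreasing in $\lambda_{\min}(\widetilde{M}_{k}^{-1}A_{k})$—lets me substitute $\sigma_{\s L}$ and $\varepsilon_{\s L}$, giving $\sigma_{\rm IMG}^{(k)}\leq\sigma_{\s L}+x_{\gamma}^{\gamma}(1-\sigma_{\s L}-\varepsilon_{\s L})$. By the defining equation \cref{x-gam} of $x_{\gamma}$, the right-hand side equals $x_{\gamma}$, completing the induction and hence the proof of \cref{estMG1}.

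The genuine content is concentrated in two places. First, the base case is where the coarsest-grid hypothesis \cref{condMG1} enters, and its lower endpoint is calibrated exactly so that $\mathscr{U}_{1}\leq x_{\gamma}$; verifying that this interval is consistent—in particular that its lower endpoint lies below the automatic upper endpoint $1$—relies on $x_{\gamma}>\sigma_{\s L}$ from \cref{lem:MG}. Second, the inductive step hinges on the sign of the amplification coefficient $1-\sigma_{\rm TG}^{(k)}-\lambda_{\min}(\widetilde{M}_{k}^{-1}A_{k})$, which I expect to be the main point to get right: its nonnegativity guarantees monotonicity in $\sigma_{\rm IMG}^{(k-1)}$ and hence the legitimacy of invoking the induction hypothesis. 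The remaining manipulations are routine once the worst-case substitutions and the fixed-point identity for $x_{\gamma}$ are in place.
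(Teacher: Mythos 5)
Your proposal is correct and follows essentially the same route as the paper: the base case applies the case-(i) bound $\mathscr{U}_1$ of \cref{thm:iTG} at level $1$ with $B_{\rm c}=\hat{A}_{0}$ and uses the lower endpoint of \cref{condMG1} (the calibration $1-\varepsilon_{\s L}-r_{1}(1-\sigma_{\s L}-\varepsilon_{\s L})\le x_{\gamma}$ matches the paper's computation), while the inductive step reduces \cref{upper} to $\sigma_{\rm IMG}^{(k)}\le\sigma_{\s L}+(1-\sigma_{\s L}-\varepsilon_{\s L})x_{\gamma}^{\gamma}=x_{\gamma}$ via the same worst-case substitutions and the fixed-point identity for $x_{\gamma}$. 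The only (immaterial) difference is the order of the monotone replacements in the inductive step—you substitute $x_{\gamma}$ for $\sigma_{\rm IMG}^{(k-1)}$ first, justified by the sign of the amplification coefficient, whereas the paper first passes to $\sigma_{\s L}$ and $\varepsilon_{\s L}$ and only then uses $1-\sigma_{\s L}-\varepsilon_{\s L}>0$; both sign conditions hold, so the argument is sound.
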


\begin{proof}
By~\cref{iTG1.1} and~\cref{condMG1}, we have
\begin{align*}
\sigma_{\rm IMG}^{(1)}&\leq 1-\frac{1-\varepsilon_{\s L}-x_{\gamma}}{(1-\varepsilon_{\s L}-\sigma_{\s L})K_{\rm TG}^{(1)}}-\frac{x_{\gamma}-\sigma_{\s L}}{1-\varepsilon_{\s L}-\sigma_{\s L}}\lambda_{\min}(\widetilde{M}_{1}^{-1}A_{1})\\
&=1-\frac{(1-\varepsilon_{\s L}-x_{\gamma})\big(1-\sigma_{\rm TG}^{(1)}\big)+(x_{\gamma}-\sigma_{\s L})\lambda_{\min}(\widetilde{M}_{1}^{-1}A_{1})}{1-\varepsilon_{\s L}-\sigma_{\s L}}\\
&\leq 1-\frac{(1-\varepsilon_{\s L}-x_{\gamma})\Big(1-\max\limits_{1\leq k\leq L}\sigma_{\rm TG}^{(k)}\Big)+(x_{\gamma}-\sigma_{\s L})\min\limits_{1\leq k\leq L}\lambda_{\min}(\widetilde{M}_{k}^{-1}A_{k})}{1-\varepsilon_{\s L}-\sigma_{\s L}}\\
&=1-\frac{(1-\varepsilon_{\s L}-x_{\gamma})(1-\sigma_{\s L})+(x_{\gamma}-\sigma_{\s L})\varepsilon_{\s L}}{1-\varepsilon_{\s L}-\sigma_{\s L}}=x_{\gamma}.
\end{align*}
From~\cref{upper}, we have
\begin{align*}
\sigma_{\rm IMG}^{(k)}&\leq\Big(1-\big(\sigma_{\rm IMG}^{(k-1)}\big)^{\gamma}\Big)\sigma_{\rm TG}^{(k)}+\big(\sigma_{\rm IMG}^{(k-1)}\big)^{\gamma}\big(1-\lambda_{\min}(\widetilde{M}_{k}^{-1}A_{k})\big)\\
&\leq\Big(1-\big(\sigma_{\rm IMG}^{(k-1)}\big)^{\gamma}\Big)\max_{1\leq k\leq L}\sigma_{\rm TG}^{(k)}+\big(\sigma_{\rm IMG}^{(k-1)}\big)^{\gamma}\Big(1-\min_{1\leq k\leq L}\lambda_{\min}(\widetilde{M}_{k}^{-1}A_{k})\Big)\\
&=\Big(1-\big(\sigma_{\rm IMG}^{(k-1)}\big)^{\gamma}\Big)\sigma_{\s L}+(1-\varepsilon_{\s L})\big(\sigma_{\rm IMG}^{(k-1)}\big)^{\gamma}\\
&=\sigma_{\s L}+(1-\sigma_{\s L}-\varepsilon_{\s L})\big(\sigma_{\rm IMG}^{(k-1)}\big)^{\gamma}.
\end{align*}
If $\sigma_{\rm IMG}^{(k-1)}\leq x_{\gamma}$, then
\begin{displaymath}
\sigma_{\rm IMG}^{(k)}\leq(1-\sigma_{\s L}-\varepsilon_{\s L})x_{\gamma}^{\gamma}+\sigma_{\s L}=F_{\gamma}(x_{\gamma})+x_{\gamma}=x_{\gamma}.
\end{displaymath}
The estimate~\cref{estMG1} then follows by induction.
\end{proof}

The following corollary particularizes~\cref{thm:MG1} for the cases $\gamma=1$ and $\gamma=2$.

\begin{corollary}\label{cor:MG1}
Under the assumptions of~\cref{thm:MG1}, it holds that
\begin{displaymath}
\sigma_{\rm IMG}^{(k)}\leq\begin{cases}
\frac{\sigma_{\s L}}{\sigma_{\s L}+\varepsilon_{\s L}} &\text{if $\gamma=1$},\\[2pt]
\frac{2\sigma_{\s L}}{1+\sqrt{(1-2\sigma_{\s L})^{2}+4\sigma_{\s L}\varepsilon_{\s L}}} &\text{if $\gamma=2$},
\end{cases} \quad \forall\,k=1,\ldots,L.
\end{displaymath}
\end{corollary}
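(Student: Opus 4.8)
The plan is to invoke \cref{thm:MG1}, which already reduces the entire statement to the explicit evaluation of $x_\gamma$, the unique root of \cref{x-gam} in $\big(\sigma_{\s L},\frac{\sigma_{\s L}}{\sigma_{\s L}+\varepsilon_{\s L}}\big]$, for the two special values $\gamma=1$ and $\gamma=2$. Indeed, \cref{thm:MG1} gives $\sigma_{\rm IMG}^{(k)}\le x_\gamma$ for every $k=1,\ldots,L$, so it suffices to exhibit $x_1$ and $x_2$ in closed form and verify that they coincide with the two expressions stated in the corollary.

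For $\gamma=1$, I would simply observe that \cref{x-gam} becomes the linear equation $(1-\sigma_{\s L}-\varepsilon_{\s L})x-x+\sigma_{\s L}=0$, i.e.\ $-(\sigma_{\s L}+\varepsilon_{\s L})x+\sigma_{\s L}=0$, whose solution is $x_1=\frac{\sigma_{\s L}}{\sigma_{\s L}+\varepsilon_{\s L}}$; this also matches the observation already recorded at the beginning of the proof of \cref{lem:MG}. For $\gamma=2$, \cref{x-gam} reads $(1-\sigma_{\s L}-\varepsilon_{\s L})x^{2}-x+\sigma_{\s L}=0$. Since the standing assumption $0<\sigma_{\s L}<1-\varepsilon_{\s L}$ forces the leading coefficient $a:=1-\sigma_{\s L}-\varepsilon_{\s L}$ to be positive, this is a genuine upward-opening quadratic with roots $\frac{1\pm\sqrt{\Delta}}{2a}$, where a short computation gives the discriminant $\Delta=1-4a\sigma_{\s L}=(1-2\sigma_{\s L})^{2}+4\sigma_{\s L}\varepsilon_{\s L}$.

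I would then identify which root lies in the target interval. Writing $F_2(x)=(1-\sigma_{\s L}-\varepsilon_{\s L})x^{2}-x+\sigma_{\s L}$, the facts $F_2(0)=\sigma_{\s L}>0$ and $F_2\big(\tfrac{\sigma_{\s L}}{\sigma_{\s L}+\varepsilon_{\s L}}\big)<0$ (the latter established in \cref{lem:MG}), together with the upward opening, place the \emph{smaller} root in $\big(\sigma_{\s L},\frac{\sigma_{\s L}}{\sigma_{\s L}+\varepsilon_{\s L}}\big)$ and push the larger root beyond $\frac{\sigma_{\s L}}{\sigma_{\s L}+\varepsilon_{\s L}}$. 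Hence $x_2=\frac{1-\sqrt{\Delta}}{2a}$. Finally I would recast $x_2$ in the advertised form by rationalizing: multiplying numerator and denominator by $1+\sqrt{\Delta}$ and using the identity $1-\Delta=4\sigma_{\s L}(1-\sigma_{\s L}-\varepsilon_{\s L})=4a\sigma_{\s L}$ cancels the factor $a$ and yields $x_2=\frac{2\sigma_{\s L}}{1+\sqrt{(1-2\sigma_{\s L})^{2}+4\sigma_{\s L}\varepsilon_{\s L}}}$, as claimed.

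The argument is entirely elementary, being no more than solving a linear and a quadratic equation. The only point that genuinely requires care — and the closest thing to an obstacle — is confirming that it is the smaller of the two quadratic roots that falls inside $\big(\sigma_{\s L},\frac{\sigma_{\s L}}{\sigma_{\s L}+\varepsilon_{\s L}}\big]$; this is precisely why I would appeal to the sign conditions $F_2(0)>0$ and $F_2\big(\tfrac{\sigma_{\s L}}{\sigma_{\s L}+\varepsilon_{\s L}}\big)<0$ furnished by \cref{lem:MG}, rather than merely quoting the quadratic formula and leaving the root selection ambiguous.
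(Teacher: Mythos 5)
Your proposal is correct and follows exactly the route the paper intends: the corollary is a direct particularization of \cref{thm:MG1}, so the only work is to solve \cref{x-gam} explicitly for $\gamma=1$ (linear) and $\gamma=2$ (quadratic), and your discriminant computation, root selection via the sign conditions from \cref{lem:MG}, and rationalization to the stated form are all accurate. The paper omits this proof entirely, and your write-up supplies precisely the missing elementary verification, including the one point that deserves care (that the smaller quadratic root is the one lying in $\big(\sigma_{\s L},\frac{\sigma_{\s L}}{\sigma_{\s L}+\varepsilon_{\s L}}\big]$).
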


\begin{remark}
For the V-cycle multigrid methods, if $\sigma_{\s L}\leq C\varepsilon_{\s L}$, then
\begin{displaymath}
\sigma_{\rm IMG}^{(k)}\leq\frac{\frac{\sigma_{\s L}}{\varepsilon_{\s L}}}{\frac{\sigma_{\s L}}{\varepsilon_{\s L}}+1}\leq\frac{C}{C+1}.
\end{displaymath}
For the W-cycle multigrid methods, if $\sigma_{\s L}\leq\sigma<1$ for a level-independent quantity $\sigma$, we deduce from~\cref{cor:MG1} that
\begin{displaymath}
\sigma_{\rm IMG}^{(k)}\leq\frac{2\sigma_{\s L}}{1+\sqrt{(1-2\sigma_{\s L})^{2}+4\sigma_{\s L}\varepsilon_{\s L}}}<\frac{\sigma_{\s L}}{1-\sigma_{\s L}}\leq\frac{\sigma}{1-\sigma},
\end{displaymath}
that is, our result improves the existing one in~\cite[Theorem~3.1]{Notay2007}.
\end{remark}

The next theorem gives an upper bound for $\sigma_{\rm IMG}^{(k)}$ that depends on the level index $k$, which sharpens the bound in~\cref{estMG1}.

\begin{theorem}\label{thm:MG2}
Let $\sigma_{\s L}$ and $\varepsilon_{\s L}$ be defined by~\cref{sigL} and~\cref{epsL}, respectively. Let $x_{\gamma}$ be the (unique) root of~\cref{x-gam} contained in $\big(\sigma_{\s L},\frac{\sigma_{\s L}}{\sigma_{\s L}+\varepsilon_{\s L}}\big]$. If $\sigma_{\rm IMG}^{(1)}<x_{\gamma}$, then
\begin{equation}\label{estMG2}
\sigma_{\rm IMG}^{(k)}\leq x_{\gamma}-\big(x_{\gamma}-\sigma_{\rm IMG}^{(1)}\big)\Bigg((1-\sigma_{\s L}-\varepsilon_{\s L})x_{\gamma}^{\gamma-1}\sum_{j=0}^{\gamma-1}\bigg(\frac{\delta_{\s L}}{x_{\gamma}}\bigg)^{j}\Bigg)^{k-1},
\end{equation}
where
\begin{equation}\label{deltaL}
\delta_{\s L}:=\min_{1\leq k\leq L}\sigma_{\rm TG}^{(k)}.
\end{equation}
\end{theorem}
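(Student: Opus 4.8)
The plan is to run an induction on the level index $k$, driven by the recursion \cref{upper}, but first to replace its level-dependent right-hand side by a single level-independent iteration map whose fixed point is exactly $x_\gamma$. Writing \cref{upper} as $\sigma_{\rm IMG}^{(k)}\le g_k\big(\sigma_{\rm IMG}^{(k-1)}\big)$ with $g_k(t)=\sigma_{\rm TG}^{(k)}+\big(1-\sigma_{\rm TG}^{(k)}-\lambda_{\min}(\widetilde{M}_k^{-1}A_k)\big)t^{\gamma}$, I would introduce the reference map $g(t)=\sigma_{\s L}+(1-\sigma_{\s L}-\varepsilon_{\s L})t^{\gamma}$, which by \cref{x-gam} satisfies $g(x_\gamma)=x_\gamma$.

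First I would show that $g_k(t)\le g(t)$ for every $t\in[0,1]$. This is the key comparison and, I expect, the only genuinely delicate point. It follows from the identity
\[g(t)-g_k(t)=\big(\sigma_{\s L}-\sigma_{\rm TG}^{(k)}\big)(1-t^{\gamma})+\big(\lambda_{\min}(\widetilde{M}_k^{-1}A_k)-\varepsilon_{\s L}\big)t^{\gamma},\]
whose two summands are nonnegative on $[0,1]$ because $\sigma_{\rm TG}^{(k)}\le\sigma_{\s L}$ and $\lambda_{\min}(\widetilde{M}_k^{-1}A_k)\ge\varepsilon_{\s L}$ by \cref{sigL} and \cref{epsL}. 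The subtlety is that the two defining extremal quantities pull the coefficient $1-\sigma_{\rm TG}^{(k)}-\lambda_{\min}(\widetilde{M}_k^{-1}A_k)$ in opposite directions, so one cannot bound $g_k$ by $g$ coefficient by coefficient; the $(1-t^{\gamma})$ versus $t^{\gamma}$ weighting is precisely what makes the combined difference nonnegative. Since $\sigma_{\rm IMG}^{(k-1)}\in[0,1)$, this yields the clean scalar recursion $\sigma_{\rm IMG}^{(k)}\le g\big(\sigma_{\rm IMG}^{(k-1)}\big)$, reducing the whole problem to one level-independent dynamical system.

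Next I would record two monotonicity facts. Because $g$ is nondecreasing on $[0,1]$ and $\sigma_{\rm IMG}^{(1)}<x_\gamma=g(x_\gamma)$, a one-line induction gives $\sigma_{\rm IMG}^{(k)}\le x_\gamma$ for all $k$. Simultaneously, the lower bound \cref{lower} together with \cref{deltaL} gives $\sigma_{\rm IMG}^{(k-1)}\ge\sigma_{\rm TG}^{(k-1)}\ge\delta_{\s L}$ for $k\ge 2$. With these in hand I would estimate
\[x_\gamma-\sigma_{\rm IMG}^{(k)}\ge g(x_\gamma)-g\big(\sigma_{\rm IMG}^{(k-1)}\big)=(1-\sigma_{\s L}-\varepsilon_{\s L})\Big(x_\gamma^{\gamma}-\big(\sigma_{\rm IMG}^{(k-1)}\big)^{\gamma}\Big),\]
then factor $x_\gamma^{\gamma}-t^{\gamma}=(x_\gamma-t)\sum_{j=0}^{\gamma-1}x_\gamma^{\gamma-1-j}t^{j}$ with $t=\sigma_{\rm IMG}^{(k-1)}$ and use $\sigma_{\rm IMG}^{(k-1)}\ge\delta_{\s L}\ge 0$ to replace the sum by its lower bound $\sum_{j=0}^{\gamma-1}x_\gamma^{\gamma-1-j}\delta_{\s L}^{j}=x_\gamma^{\gamma-1}\sum_{j=0}^{\gamma-1}(\delta_{\s L}/x_\gamma)^{j}$. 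This produces the contraction estimate $x_\gamma-\sigma_{\rm IMG}^{(k)}\ge C\big(x_\gamma-\sigma_{\rm IMG}^{(k-1)}\big)$, where $C$ is precisely the bracketed factor appearing in \cref{estMG2}.

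Finally, since $C\ge 0$ and $x_\gamma-\sigma_{\rm IMG}^{(k-1)}\ge 0$ (the monotonicity fact), iterating the contraction estimate down to the base level $k=1$ gives $x_\gamma-\sigma_{\rm IMG}^{(k)}\ge C^{k-1}\big(x_\gamma-\sigma_{\rm IMG}^{(1)}\big)$, which rearranges to \cref{estMG2}. The steps requiring care are the sign bookkeeping in the comparison $g_k\le g$ and the verification that $x_\gamma-\sigma_{\rm IMG}^{(k-1)}$ remains nonnegative so that multiplying through by $C$ and by the factor $(x_\gamma-t)$ does not flip the inequality; both are supplied by the two monotonicity facts above.
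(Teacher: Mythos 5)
Your proposal is correct and follows essentially the same route as the paper: the level-independent recursion $\sigma_{\rm IMG}^{(k)}\leq\sigma_{\s L}+(1-\sigma_{\s L}-\varepsilon_{\s L})\big(\sigma_{\rm IMG}^{(k-1)}\big)^{\gamma}$ (obtained by exactly the weighted grouping you describe, which the paper carries out inside the proof of its preceding theorem), subtraction of the fixed-point identity for $x_{\gamma}$, factorization of $x_{\gamma}^{\gamma}-t^{\gamma}$, the lower bound $\sigma_{\rm IMG}^{(k-1)}\geq\delta_{\s L}$ from \cref{lower}, and iteration of the resulting contraction down to $k=1$. The only cosmetic difference is that you keep the estimate in product form rather than dividing by $x_{\gamma}-\sigma_{\rm IMG}^{(k-1)}$, which harmlessly weakens the needed positivity to nonnegativity.
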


\begin{proof}
Similarly to the proof of~\cref{thm:MG1}, one can prove that $\sigma_{\rm IMG}^{(k)}<x_{\gamma}$ for all $k=1,\ldots,L$. Due to
\begin{displaymath}
\sigma_{\rm IMG}^{(k)}\leq\sigma_{\s L}+(1-\sigma_{\s L}-\varepsilon_{\s L})\big(\sigma_{\rm IMG}^{(k-1)}\big)^{\gamma} \quad \text{and} \quad x_{\gamma}=\sigma_{\s L}+(1-\sigma_{\s L}-\varepsilon_{\s L})x_{\gamma}^{\gamma},
\end{displaymath}
it follows that
\begin{displaymath}
x_{\gamma}-\sigma_{\rm IMG}^{(k)}\geq(1-\sigma_{\s L}-\varepsilon_{\s L})\Big(x_{\gamma}^{\gamma}-\big(\sigma_{\rm IMG}^{(k-1)}\big)^{\gamma}\Big),
\end{displaymath}
which yields
\begin{displaymath}
\frac{x_{\gamma}-\sigma_{\rm IMG}^{(k)}}{x_{\gamma}-\sigma_{\rm IMG}^{(k-1)}}\geq(1-\sigma_{\s L}-\varepsilon_{\s L})x_{\gamma}^{\gamma-1}\sum_{j=0}^{\gamma-1}\Bigg(\frac{\sigma_{\rm IMG}^{(k-1)}}{x_{\gamma}}\Bigg)^{j}.
\end{displaymath}
By~\cref{lower} and~\cref{deltaL}, we have
\begin{displaymath}
\sigma_{\rm IMG}^{(k-1)}\geq\sigma_{\rm TG}^{(k-1)}\geq\delta_{\s L}.
\end{displaymath}
Then
\begin{displaymath}
\frac{x_{\gamma}-\sigma_{\rm IMG}^{(k)}}{x_{\gamma}-\sigma_{\rm IMG}^{(k-1)}}\geq(1-\sigma_{\s L}-\varepsilon_{\s L})x_{\gamma}^{\gamma-1}\sum_{j=0}^{\gamma-1}\bigg(\frac{\delta_{\s L}}{x_{\gamma}}\bigg)^{j}.
\end{displaymath}
Hence,
\begin{displaymath}
\frac{x_{\gamma}-\sigma_{\rm IMG}^{(k)}}{x_{\gamma}-\sigma_{\rm IMG}^{(1)}}=\prod_{i=2}^{k}\frac{x_{\gamma}-\sigma_{\rm IMG}^{(i)}}{x_{\gamma}-\sigma_{\rm IMG}^{(i-1)}}\geq\Bigg((1-\sigma_{\s L}-\varepsilon_{\s L})x_{\gamma}^{\gamma-1}\sum_{j=0}^{\gamma-1}\bigg(\frac{\delta_{\s L}}{x_{\gamma}}\bigg)^{j}\Bigg)^{k-1},
\end{displaymath}
which leads to the estimate~\cref{estMG2}.
\end{proof}

\begin{remark}
Observe that
\begin{align*}
(1-\sigma_{\s L}-\varepsilon_{\s L})x_{\gamma}^{\gamma-1}\sum_{j=0}^{\gamma-1}\bigg(\frac{\delta_{\s L}}{x_{\gamma}}\bigg)^{j}&=\frac{(1-\sigma_{\s L}-\varepsilon_{\s L})x_{\gamma}^{\gamma}-(1-\sigma_{\s L}-\varepsilon_{\s L})\delta_{\s L}^{\gamma}}{x_{\gamma}-\delta_{\s L}}\\
&=\frac{x_{\gamma}-\sigma_{\s L}-(1-\sigma_{\s L}-\varepsilon_{\s L})\delta_{\s L}^{\gamma}}{x_{\gamma}-\delta_{\s L}}\in(0,1).
\end{align*}
Thus, the upper bound in~\cref{estMG2} is a strictly increasing function with respect to $k$.
\end{remark}

The condition $\sigma_{\rm IMG}^{(1)}<x_{\gamma}$ in~\cref{thm:MG2} will be satisfied if $\hat{A}_{0}$ is simply chosen as $A_{0}$, in which case the convergence factor is denoted by $\sigma_{\rm MG}^{(k)}$. This yields the following corollary.

\begin{corollary}
Let $\sigma_{\s L}$, $\varepsilon_{\s L}$, and $\delta_{\s L}$ be defined by~\cref{sigL}, \cref{epsL}, and~\cref{deltaL}, respectively. Let $x_{\gamma}$ be the (unique) root of~\cref{x-gam} contained in $\big(\sigma_{\s L},\frac{\sigma_{\s L}}{\sigma_{\s L}+\varepsilon_{\s L}}\big]$. If $\hat{A}_{0}=A_{0}$, then
\begin{equation}\label{estMG2-cor}
\sigma_{\rm MG}^{(k)}\leq x_{\gamma}-(x_{\gamma}-\sigma_{\s L})\Bigg((1-\sigma_{\s L}-\varepsilon_{\s L})x_{\gamma}^{\gamma-1}\sum_{j=0}^{\gamma-1}\bigg(\frac{\delta_{\s L}}{x_{\gamma}}\bigg)^{j}\Bigg)^{k-1}.
\end{equation}
In particular, one has
\begin{equation}\label{VW}
\sigma_{\rm MG}^{(k)}\leq\begin{cases}
x_{1}\big(1-(1-\sigma_{\s L}-\varepsilon_{\s L})^{k}\big) &\text{if $\gamma=1$},\\[2pt]
x_{2}-(x_{2}-\sigma_{\s L})\big((1-\sigma_{\s L}-\varepsilon_{\s L})(x_{2}+\delta_{\s L})\big)^{k-1} &\text{if $\gamma=2$},
\end{cases}
\end{equation}
where
\begin{displaymath}
x_{1}=\frac{\sigma_{\s L}}{\sigma_{\s L}+\varepsilon_{\s L}} \quad \text{and} \quad x_{2}=\frac{2\sigma_{\s L}}{1+\sqrt{(1-2\sigma_{\s L})^{2}+4\sigma_{\s L}\varepsilon_{\s L}}}.
\end{displaymath}
\end{corollary}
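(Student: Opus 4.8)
The plan is to read the corollary off \cref{thm:MG2} and then perform two elementary specializations. First I would check that the hypothesis $\sigma_{\rm IMG}^{(1)}<x_{\gamma}$ of \cref{thm:MG2} is automatically met once $\hat{A}_{0}=A_{0}$. With this choice the level-$1$ coarse solve is exact, since $P_{1}\hat{A}_{0}^{-1}P_{1}^{T}A_{1}=P_{1}A_{0}^{-1}P_{1}^{T}A_{1}=\varPi_{A_{1}}$, so that $E_{\rm IMG}^{(1)}=E_{\rm TG}^{(1)}$ and hence $\sigma_{\rm MG}^{(1)}=\sigma_{\rm TG}^{(1)}$. By \cref{sigL} we have $\sigma_{\rm TG}^{(1)}\leq\sigma_{\s L}$, and \cref{lem:MG} gives $x_{\gamma}>\sigma_{\s L}$; therefore $\sigma_{\rm MG}^{(1)}\leq\sigma_{\s L}<x_{\gamma}$, as required.

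Next I would invoke \cref{thm:MG2} with $\sigma_{\rm IMG}^{(1)}=\sigma_{\rm MG}^{(1)}$ and relax the coefficient. Writing $\Theta:=(1-\sigma_{\s L}-\varepsilon_{\s L})x_{\gamma}^{\gamma-1}\sum_{j=0}^{\gamma-1}(\delta_{\s L}/x_{\gamma})^{j}$, the estimate \cref{estMG2} reads $\sigma_{\rm MG}^{(k)}\leq x_{\gamma}-(x_{\gamma}-\sigma_{\rm MG}^{(1)})\,\Theta^{k-1}$. The one substantive point is to replace $\sigma_{\rm MG}^{(1)}$ by the possibly larger $\sigma_{\s L}$: since $\sigma_{\rm MG}^{(1)}=\sigma_{\rm TG}^{(1)}\leq\sigma_{\s L}$ we have $0\le x_{\gamma}-\sigma_{\s L}\le x_{\gamma}-\sigma_{\rm MG}^{(1)}$, and because $\Theta^{k-1}\geq 0$ (in fact $\Theta\in(0,1)$, as observed in the remark following \cref{thm:MG2}), the subtracted term can only shrink, so the right-hand side increases and $\sigma_{\rm MG}^{(k)}\leq x_{\gamma}-(x_{\gamma}-\sigma_{\s L})\,\Theta^{k-1}$ remains a valid upper bound. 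This is exactly \cref{estMG2-cor}. I expect this monotonicity-in-the-coefficient substitution to be the main (and essentially the only) conceptual obstacle; the rest is bookkeeping.

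Finally I would specialize \cref{estMG2-cor} to $\gamma=1$ and $\gamma=2$. For $\gamma=1$ the sum collapses and $\Theta=1-\sigma_{\s L}-\varepsilon_{\s L}$, while \cref{lem:MG} gives the root $x_{1}=\sigma_{\s L}/(\sigma_{\s L}+\varepsilon_{\s L})$; the identity $x_{1}-\sigma_{\s L}=x_{1}(1-\sigma_{\s L}-\varepsilon_{\s L})$ then turns the bound into $x_{1}\big(1-(1-\sigma_{\s L}-\varepsilon_{\s L})^{k}\big)$. For $\gamma=2$ one has $\Theta=(1-\sigma_{\s L}-\varepsilon_{\s L})(x_{2}+\delta_{\s L})$, so it only remains to write $x_{2}$ in closed form. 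For this I would solve the quadratic $(1-\sigma_{\s L}-\varepsilon_{\s L})x^{2}-x+\sigma_{\s L}=0$ coming from \cref{x-gam}, compute its discriminant as $(1-2\sigma_{\s L})^{2}+4\sigma_{\s L}\varepsilon_{\s L}$, select the smaller root (the one lying in $(\sigma_{\s L},\sigma_{\s L}/(\sigma_{\s L}+\varepsilon_{\s L})]$ singled out by \cref{lem:MG}), and rationalize the numerator to obtain $x_{2}=2\sigma_{\s L}\big/\big(1+\sqrt{(1-2\sigma_{\s L})^{2}+4\sigma_{\s L}\varepsilon_{\s L}}\big)$. Inserting these expressions for $\Theta$ and $x_{\gamma}$ into \cref{estMG2-cor} yields \cref{VW}.
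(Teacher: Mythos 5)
Your proposal is correct and follows essentially the same route as the paper: verify $\sigma_{\rm MG}^{(1)}=\sigma_{\rm TG}^{(1)}\leq\sigma_{\s L}<x_{\gamma}$ so that \cref{thm:MG2} applies, then weaken $\sigma_{\rm IMG}^{(1)}$ to $\sigma_{\s L}$ and specialize to $\gamma=1,2$. The paper leaves the monotone substitution and the closed forms of $x_{1},x_{2}$ implicit, and your explicit verification of these (including the discriminant computation and rationalization for $\gamma=2$) is accurate.
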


\begin{proof}
If $\hat{A}_{0}=A_{0}$, then
\begin{displaymath}
\sigma_{\rm MG}^{(1)}=\sigma_{\rm TG}^{(1)}\leq\sigma_{\s L}<x_{\gamma}.
\end{displaymath}
The estimate~\cref{estMG2-cor} then follows immediately from~\cref{estMG2}.
\end{proof}

\begin{remark}
For some simple smoothers, $\varepsilon_{\s L}$ may be very small. A solution is to use more powerful smoothers, like the SIF (structured incomplete factorization) and eSIF preconditioners in~\cite{Xia2017,Xia2020}. For the SIF-type smoothers, $\varepsilon_{\s L}$ is a controllable quantity, which will not be tiny if a reasonable truncation tolerance is used. Note that our theory is valid as long as $\sigma_{\s L}<1-\varepsilon_{\s L}$. If $\varepsilon_{\s L}$ is very small, then multigrid methods can carry over the convergence properties of two-grid methods under a very weak constraint on two-grid convergence speed. In the extreme case when $\varepsilon_{\s L}$ is zero, we deduce from~\cref{VW} that
\begin{itemize}

\item the V-cycle multigrid satisfies
\begin{equation}\label{V}
\sigma_{\rm MG}^{(k)}\leq 1-(1-\sigma_{\s L})^{k};
\end{equation}

\item the W-cycle multigrid satisfies
\begin{equation}\label{W}
\sigma_{\rm MG}^{(k)}\leq\begin{cases}
\frac{\sigma_{\s L}}{1-\sigma_{\s L}}-\frac{\sigma_{\s L}^{2}}{1-\sigma_{\s L}}\big(\sigma_{\s L}+(1-\sigma_{\s L})\delta_{\s L}\big)^{k-1} & \text{if $\sigma_{\s L}<\frac{1}{2}$},\\[2pt]
1-(1-\sigma_{\s L})^{k}(1+\delta_{\s L})^{k-1} & \text{if $\frac{1}{2}\leq\sigma_{\s L}<1$}.
\end{cases}
\end{equation}

\end{itemize}
We remark that the estimates~\cref{V} and~\cref{W} are applicable for $\sigma_{\s L}<1$.
\end{remark}

To compare the performances of~\cref{V}, \cref{W}, and the existing estimate in~\cite[Theorem~3.1]{Notay2007}, we give a numerical example: the 2D Poisson's equation with homogeneous Dirichlet boundary condition on a unit square (using the P1-finite element on a quasi-uniform grid with one million degrees of freedom). The resulting linear system is solved by the classical algebraic multigrid method~\cite{Brandt1985,Ruge1987} in a standard setting: the classical coarsening and the direct interpolation are exploited. In the experiments, we set the number of pre- and post-smoothing steps to be $1$, the finest-level index $L$ to be $5$, and the strong threshold to be $\frac{1}{4}$ (no truncation is applied). The coarsest-grid systems are solved by a sparse direct solver. The asymptotic convergence factor of a multigrid method is computed when the energy norm of error is less than $10^{-12}$.

\begin{table}[!htbp]
\caption{Actual convergence factors of multigrid methods and their upper bounds}
\centering
\begin{tabular}{c c c c c c c}
\hline\hline
\textbf{Smoother} & $\bm{\delta_{\s L}}$ & $\bm{\sigma_{\s L}}$ & \textbf{Cycle} & \textbf{Conv. factor} & \textbf{Existing} & \textbf{New} \\[0.5ex]
\hline
  &  &  & V & 0.876 & N/A & 0.955 \\[-1.2ex]
\raisebox{2ex}{Gauss--Seidel} & \raisebox{2ex}{0.232} & \raisebox{2ex}{0.462} & W & 0.556 & 0.859 & 0.812 \\[2ex]
  &  &  & V & 0.905 & N/A & 0.993 \\[-1.2ex]
\raisebox{2ex}{$\omega$-Jacobi ($\omega=0.5$)} & \raisebox{2ex}{0.292} & \raisebox{2ex}{0.625} & W & 0.639 & Fail & 0.979 \\
\hline
\end{tabular}
\label{tab:comp}
\end{table}

\cref{tab:comp} displays that our estimates improve the existing one in~\cite{Notay2007}. Moreover, the W-cycle multigrid method may carry over two-grid convergence even when $\sigma_{\s L}>\frac{1}{2}$.

\section{Conclusions} \label{sec:con}

In this paper, we present a theoretical framework for the convergence analysis of inexact two-grid methods (for SPD problems), which improves and extends the existing theory for two-grid methods. A natural question is how to construct the coarse-grid matrix $B_{\rm c}$ or approximate the Galerkin coarse-grid matrix $A_{\rm c}$, which serves as a motivation for designing new multigrid-based algorithms. As an application of the framework, we establish a unified convergence theory for standard multigrid methods, which allows the coarsest-grid system to be solved approximately. Furthermore, the framework can be used to analyze hybrid multilevel methods, like the VW- and WV-cycle multigrid methods in~\cite{XXF2021}. In the future, we expect to analyze the convergence of inexact two-grid methods for nonsymmetric problems, which is an interesting topic that deserves in-depth study; see, e.g.,~\cite{Manteuffel2018,Manteuffel2019,Manteuffel2019-2,Garcia2020,Notay2020}.

\section*{Acknowledgment}

The authors would like to thank the anonymous referees for their valuable comments and suggestions, which greatly improved the original version of this paper.

\bibliographystyle{siamplain}
\bibliography{references}

\end{document}